\title
[Inverse moment problem for convex polytopes]
{The inverse moment problem for convex polytopes}
\author[Nick Gravin, Jean Lasserre, Dmitrii Pasechnik, Sinai Robins]
{Nick Gravin$^{1,3}$,  Jean Lasserre$^2$,
Dmitrii V. Pasechnik$^1$, Sinai Robins$^1$}
\address{$^1$ School of Physical and Mathematical Sciences, Nanyang
Technological University, 21 Nanyang Link, 637371 Singapore. {\rm Supported by Singapore Ministry of Education ARF Tier 2 Grant MOE2011-T2-1-090.}}
\address{$^2$  LAAS-CNRS, 7 Avenue du Colonel Roche,
31 077 Toulouse Cedex 4, France}
\address{$^3$  Steklov Institute of Mathematics at St.Petersburg,
27 Fontanka, St.Petersburg 191023, Russia}
\date\today
\theoremstyle{definition}
\newtheorem{prop}{Proposition}[section]
\newtheorem{theorem}{Theorem}
\newtheorem{lemma}{Lemma}
\newtheorem*{mthm}{Main Theorem}
\newtheorem{rem}[prop]{Remark}
\theoremstyle{remark}
\newcounter{reml}
\newcommand{\C}{{\mathbb{C}}}
\newcommand{\F}{{\mathbb{F}}}
\newcommand{\K}{{\text{Ker}(\mathbf{H})}}
\newcommand{\Q}{{\mathbb{Q}}}
\newcommand{\R}{{\mathbb{R}}}
\newcommand{\V}{{\text{Vert}}}
\newcommand{\Z}{{\mathbb{Z}}}
\DeclareMathOperator{\vol}{Vol}
\renewcommand{\epsilon}{\varepsilon}
\renewcommand{\theta}{\vartheta}
\newcommand{\w}{\mathbf{w}}
\newcommand{\ve}{\mathbf{v}}
\newcommand{\x}{\mathbf{x}}
\newcommand{\y}{\mathbf{y}}
\newcommand{\z}{\mathbf{z}}
\newcommand{\m}{\mathbf{m}}
\newcommand{\de}{{d^o}}
\newcommand{\pr}{\mathbf{Prob}}
\newcommand{\ba}{\mathbf{a}}
\newcommand{\bb}{\mathbf{b}}
\newcommand{\bc}{\mathbf{c}}
\newcommand{\be}{\mathbf{e}}
\newcommand{\bu}{\mathbf{u}}
\newcommand{\bv}{\mathbf{v}}
\newcommand{\bD}{\mathbf{D}}
\newcommand{\bH}{\mathbf{H}}
\newcommand{\bT}{\mathbf{T}}
\newcommand{\bV}{\mathbf{V}}
\newcommand{\bil}[2]{\langle{#1},{#2}\rangle}
\newcommand{\marginnote}[1]{\vrule width0pt height0pt depth0pt
 \vadjust{\vbox to0pt{\vss\hbox to\hsize{\hskip\hsize\quad
 #1\hss}\vskip1.5pt}}}
\newcommand{\beq}{\begin{equation}}
\newcommand{\eeq}{\end{equation}}
\newcommand{\BrLafull}{Brion-Barvinok-Khovanskii-Lawrence-Pukhlikov}
\newcommand{\BrLa}{BBaKLP}
\begin{document}
\begin{abstract}
We present a general and novel approach for
the reconstruction of any convex $d$-dimensional polytope
$P$, assuming knowledge of finitely many of its integral
moments. In particular, we show that the vertices of an
$N$-vertex convex polytope in $\R^d$ can be reconstructed
from the knowledge of $O(DN)$ axial moments (w.r.t. to an
unknown polynomial measure of degree $D$), in $d+1$ distinct
directions in general position. Our approach is based on the collection
of moment formulas due to Brion, Lawrence,
Khovanskii-Pukhikov, and Barvinok that arise in the discrete
geometry of polytopes, combined with what is variously known
as Prony's method, or the Vandermonde factorization of finite
rank Hankel matrices.
\end{abstract}

\maketitle

\section{Introduction}
\label{sec:intro}

%Story line
The {\em inverse} problem of recognizing an
object from its given moments is a fundamental
and important problem in both applied and pure mathematics. For example, this
problem arises quite often in computer tomography, inverse potentials, signal
processing, and statistics and probability.  In computer tomography, for
instance, the X-ray images of an object can be used to estimate the moments of
the underlying mass distribution, from which one seeks to recover the shape of
the object that appear on some given images. In gravimetry applications, the
measurements of the gravitational field can be converted into information
concerning the moments, from which one seeks to recover the shape of the source
of the anomaly.

%Our Contribution
The goal of this paper is to present a general and novel approach for the
reconstruction of any convex $d$-dimensional polytope $P$, from knowledge of its moments.  Our approach is quite different
from the quadrature-based approach that is currently used in the  literature.    Our starting point
is the collection of moment formulas, due to
\BrLafull\  (in what follows referred for brevity as \BrLa )
that arises in the discrete
geometry of polytopes, and is valid for all dimensions \cite{MR982338,MR1079024,MR1173086,MR1118837}, and \cite[Chapter~10]{BR07}.   
We then set up a matrix equation involving a variable Vandermonde matrix,
with an associated Hankel matrix whose kernel helps us reconstruct the vertices of $P$.  We are also able to reconstruct the vertices of a convex polytope
with variable density,  using similar methods.
\nocite{MR1190788}

%Corollaries
This new approach permits us to reconstruct {\em exactly} the vertices of $P$, using very few moments, relative to the vertex description of $P$.   While the  computation of integrals over polytopes has received attention recently (see e.g.
\cite{MR2728981}), our work appears to be the first to provide tools to treat the inverse moment problem in general.

A nice feature of our algorithm is that we do not need to know a priori the
number of vertices of $P$, only a rough upper bound for their number.  The
algorithm automatically retrieves the number of vertices of $P$ as the rank of
a certain explicit Hankel matrix.

In fact, a surprising corollary is that we only require $O(Nd)$ moments in order to reconstruct all of the $N$ vertices of $P \subset \R^d$.
Suppose we are solving the inverse moment problem in the context of an unknown density function $\rho$.
An interesting consequence of our algorithm is that even though $\rho$ is unknown, we may easily adapt our algorithm to recover the vertices of $P$ in $O(N \de d)$ steps, where $\de$ is an upper bound on the degree of $\rho$.

Now suppose we simply solve the direct problem of writing down the moments of a given vertex set of a known polytope, with a known polynomial density function $\rho$.  In this direct problem, it would take $\binom{\de + d}{d}$ data to describe the polynomial  $\rho$ function, because the space of possible polynomials $\rho$ has this dimension.  However, for the inverse problem, where $\rho$ is unknown, perhaps a counter-intuitive consequence of our algorithm is that
we only require $O(N \de d)$ data to recover the vertex set of $P$, which might be smaller than $\binom{\de + d}{d}$.

In the existing literature on inverse problems from moments, one immediately
encounters a sharp distinction between the $2$-dimensional case and the general
$d$-dimensional case, with $d>2$. While in the former case
%the main existing tool is
a well-known quadrature formula allows us to solve the problem exactly for
so-called quadrature domains, where for the latter case one has to ``slice up''
the domain of interest into thin 2-dimensional pieces, solve the resulting
2-dimensional problems, and patch up an approximate solution from these
2-dimensional solutions. On the other hand, in the recent  work of Cuyt et al.
\cite{MR2199920} the authors can approximately recover a general
$n$-dimensional shape by using an interesting property of multi-dimensional
Pad\'e approximants.

%The issue of sensitivity to error measurements and reconstruction under noisy measurements is beyond the scope of the present paper,
%but is certainly worth investigating in a near future.

For $\z\in\R^d$ and each nonnegative integer $j$, we define the $j$-th {\em moment} of $P$ with respect to the density $\rho$ by:
\[
\mu_j(\z):= \mu_{j,\rho} (\z) := \int_P  \langle \x, \z \rangle  ^j \rho(\x) d\x.
 \]
In this text we restrict ourselves to any density function $\rho$ which is given by a polynomial measure, and which does not vanish
on the vertices $\V(P)$ of $P$.

 We note that only in the Appendix, when we give proofs
of the known \BrLa\  moment formulas  below, we will need to replace the real
vector $\z$ by a complex vector, in order to allow convergence of some
Fourier-Laplace transforms of cones, but otherwise $\z$ will always be a real
vector.     We say that $\z$ is in {\it general position} if it is chosen at random from
the continuous Gaussian distribution on $\R^d$. 

\medskip
\noindent
Our main result may be formulated as
follows.
\newpage
\begin{mthm}\label{thm:mthm}
Let $P\subset \R^d$ be a $d$-dimensional polytope with $N$ vertices, and suppose we are only given the data in the form of
$O(d_\rho N)$ moments $\mu_{j,\rho}(\z)$,
for an unknown density $\rho\in\R[\x]$ of degree $d_\rho$, and for
each of $d+1$ vectors $\z\in\R^d$ in general position.
Then the data determines $P$ uniquely, using the following algorithm:

\begin{enumerate}

\item  Given $2m-1 \ge 2N+1$ moments $c_1,\dots,c_{2m-1}$ for $\z$, construct a square Hankel matrix $\bH(c_1,\dots,c_{2m-1}).$
\medskip
\item Find the vector $v=\left(a_0, \ldots, a_{M-1}, 1, 0, \ldots, 0 \right)$ in $\K$ with the minimal possible $M.$ It turns out that the number of 
    vertices $N$ is in fact equal to $M$.
\medskip
\item The set of roots $\{x_i(\z) = \bil{\bv_i}{\z} |\bv_i\in\V(P)\}$ of the polynomial
      $p_\z(t) =t^N+\sum_{i=0}^{N-1}a_i t^i$
       then equals the set of  projections of $\V(P)$ onto $\z$.
\end{enumerate}

By contrast with a choice of a general position vector $\z$, we also define, for a simple polytope $P$,
a {\it generic vector} $\z \in \Q^d$ to be a vector that lies in the complement of the finite union of
hyperplanes which are orthogonal to all of the edges of $P$.  For non-simple polytopes $P$, we
will later extend this definition of a generic vector $\z \in \Q^d$, in Section \ref{sect:rational.implementation}.

\iffalse
We furthermore prove Section \ref{sec:univar} that the vertex set
$\V(P)\subset\Q^d$ of any rational convex polytope $P$ can be computed in polynomial time, from exact data, if we are supplied $2d-1$
generic vectors $\z\in\Q^d$ and the corresponding
$O(d_\rho N)$ moments for each such generic vector $\z$. 
\fi
We furthermore prove in Section \ref{sec:univar} that the vertex set
$\V(P)\subset\Q^d$ of any rational convex polytope $P$ can be found in polynomial time with a probability arbitrary close to $1$, 
from the exact measurements of $O(d_\rho N)$ moments in carefully chosen $2d-1$ random generic directions $\z\in\Q^d$.

In Section \ref{sec:algorithm}, we indicate how $\V(P)\subset\R^d$  can also be efficiently approximated even when the data is noisy.
\end{mthm}

One punchline of the proof is that an appropriate scaling of the
sequence of  the moments $\mu_{j,\rho}(\z)$ ($j=0,1,\dots$) for a fixed $\z$
is a finite sum of exponential functions, and thus
satisfies a linear recurrence relation
(cf. e.g. \cite[Theorem 4.1.1(iii)]{MR1442260}). Then an application
of what is variously known as Prony's method, or Vandermonde factorization
of a finite rank Hankel matrix (cf. e.g. \cite{MR1729647}),
allows one to find $\bil{\z}{\ve}$ for $\ve\in\V(P)$.
As these methods are scattered along quite a number of sources,
we have chosen to present a self-contained exposition for clarity and for ease of efficient implementation.

Reconstructing $\V(P)$ from the  $\bil{\z}{\ve}$ is then relatively
straightforward, provided that we know these projections for sufficiently many
$\z$ in general position.  For the latter, we present an exact procedure as well as a
parametric one---the latter with the focus being less noise-sensitive.

The remainder of the paper is organized as follows.   In Section
\ref{sec:prelim}, we define the objects we are dealing with, as well as
the appropriate background for ease of reading.  We also give
the known formulas for the moments of simple polytopes.  In Section
\ref{sec:project} we construct a polynomial whose roots correspond to the
projections of the vertices onto directions $\z$ in general positions, by using the moment
formulas and an associated Hankel matrix.
In Section~\ref{sec:polymeasure} we extend the latter to the case of
unknown polynomial measures.  In Sections
\ref{sec:non_simple} and
\ref{sec:algorithm}, we
extend the algorithm from Sections \ref{sec:project} and \ref{sec:polymeasure},
which deals with simple
polytopes, to all convex polytopes. This completes the proof of the first claim
of Main Theorem. In Section \ref{sec:polymeasure} we also discuss the question of
reconstructing $\rho$ after $\V(P)$ is found.

In Section \ref{sec:univar} we use
univariate polynomials to paste together the projections retrieved from Section
\ref{sec:project} to build up all of the coordinates of each vertex, not just
their projections, completing the proof of the second claim of
Main Theorem.  In Appendix \ref{LawrenceProof}
we outline
some proofs of the known \BrLa\ moment formulas from Section \ref{sec:prelim},
using Fourier techniques.

\section{Definitions and moment formulas for convex, rational polytopes}
\label{sec:prelim}

Here we describe an explicit set of formulas for the moments of any  convex
polytope $P \subset \R^d$.  We begin with some combinatorial-geometric
definitions of the objects involved.  To fix notation, our convex polytope $P$
will always have $\mathit{N}$ vertices.  We say that $P$ is {\em simple} if each vertex
$\bv$ of $P$ is incident with exactly $d$ edges of $P$.   We first treat the
case of a  simple convex polytope
%a property that automatically holds true for two dimensional polygons,
and then later, in Section \ref{sec:non_simple} we provide an extension to non-simple convex polytopes.

There is an elegant and useful formulation, originally due to \BrLa\
\cite{MR1079024}, for the moments of any simple polytope in $\R^d$, in terms of
its vertex and edge data.  Specifically, let the set of all vertices of $P$ be
given by $\V(P)$.   For each $\bv \in \V(P)$, we consider a fixed set of
vectors, parallel to the edges of $P$ that are incident with $\bv$, and call
these edge vectors $w_1(\bv)$,\dots $w_d(\bv)$.  Geometrically, the polyhedral
cone generated by the non-negative real span of these edges at $\bv$ is called
the tangent cone at $\bv$, and is written as $K_\bv$.  For each simple tangent
cone $K_\bv$, we let $|\det K_\bv|$ be the volume of the parallelepiped formed
by the $d$ edge vectors $w_1(\bv), \dots, w_d(\bv)$.  Thus, $|\det K_\bv| = |
\det( w_1(\bv),  \dots, w_d(\bv)) |$, the determinant of this parallelepiped.

The following results of \BrLa\ \cite{MR1079024} give  the moments
of a simple polytope $P$ in terms of the local vertex and tangent
cone data that we described above. For each integer $j \geq 0$, we
have
\beq\label{brion} \mu_j (\z)= \frac{j! (-1)^d}{ (j+d)!}
\sum_{\bv\in \V(P)} \bil{\bv}{\z}^{j+d} D_\bv(\z),
\eeq
where
\beq\label{Dvz} D_\bv(\z):=\frac{|\det
K_\bv|}{\prod_{k=1}^d\bil{w_k(\bv)}{\z}},
\eeq
for each $\z \in
\R^d$ such that the denominators in $D_\bv(\z)$ do not vanish.
Moreover, we also have the following companion identities:
\beq\label{brionzero} 0=\sum_{v\in \V(P)} \bil{\bv}{\z}^{j}
D_\bv(\z), \eeq for each $ 0 \leq j \leq d-1$.    Thus, for
example, if $\z=(1, 0, \dots , 0)$, the equations \eqref{brion}
and \eqref{brionzero} deal with the first coordinate of each of
the vertices $\bv \in \V(P)$.   We also note that all of these
formulas involve only homogeneous, rational functions of $\z=(z_1,
\dots, z_d)$.

In the more general case of non-simple polytopes, we may triangulate each tangent cone into simple cones, thereby
getting a slightly more general form of \eqref{brion} above, namely:
\beq \mu_j (\z)= \frac{j! (-1)^d}{ (j+d)!}
\sum_{\bv\in \V(P)} \bil{\bv}{\z}^{j+d} \tilde{D}_\bv(\z), \eeq where
each $\tilde{D}_\bv(\z)$ is a rational function that now comes from the non-simple tangent cone at $\bv$.
We note that $\tilde{D}_\bv(\z)$ is in fact a sum of the relevant  rational functions $D_\bv(\z)$
that are associated
to each simple cone in the triangulation of the non-simple tangent cone $K_{\bv}$.

Throughout the paper, we will mainly work in the context of a continuous domain for our choices of
admissible $\z$ vectors.  To be precise, we say that a vector $\z$ is in {\it general position}
 if none of the following conditions is true:
\begin{enumerate}
\item[(a)]  $\z$ is a zero or a pole of $\tilde{D}_\bv(\z)$ , for any $\bv \in \V(P)$.
\item[(b)]  There exist two vertices $\bv_1, \bv_2 \in \V(P)$ such that
 $\bil{\bv_1}{\z}=\bil{\bv_2}{\z}$.
 \end{enumerate}
In the penultimate section, we indicate how to implement our algorithm by transitioning
all of our formulas to a rational context, and picking our $\z$ vectors to lie in a finite,
rational $d$-dimensional cube.

The goal here is to reconstruct  the  polytope $P$ from a  given sequence of  moments $\{ \mu_j (\z) \ |  \ j = 1, 2, 3, \ldots \}$.
That is, we wish to find an explicit algorithm that locates the vertices $v$ of the polytope $P$, in terms of the moments of $P$.
To emphasize the fact that the moment equations above can be put into matrix form, we define our scaled vector of moments by:
% $c_j = \mu_j  \frac{(j+d)!(-1)^d}{j!}$, whence
\beq\label{c-vector}
\left (c_1, \dots, c_{k+1} \right) = \left(0, \dots, 0, \frac{d!(-1)^d}{0!} \mu_0,  \frac{(1+d)!(-1)^d}{1!} \mu_1, \dots, \frac{k!(-1)^d}{(k-d)!} \mu_{k-d}\right),
\eeq
so that the vector ${\bf{c}} = \left(c_1, \dots, c_{k +1}\right)$ has zeros in the first $d$ coordinates, and scaled moments in the last $k+1-d$ coordinates.
Thus, putting the moment identities \eqref{brion} and  \eqref{brionzero} above into matrix form, we have:

\beq\label{momentmatrix}
%\bV(\bil{\bv_1}{\z},\dots,\bil{\bv_N}{\z})\,\bD^t(\z)\,=\,\bc^t =(c_1,\dots,c_{k+1})^t,
\begin{pmatrix}
1&1&\dots&1\\
 \langle \bv_1, \z \rangle &  \langle \bv_2, \z \rangle & \dots &  \langle \bv_N, \z \rangle   \\
{ \langle \bv_1, \z \rangle}^2 & { \langle \bv_2, \z \rangle}^2 & \dots &  {\langle \bv_N, \z \rangle}^2  \\
\vdots&\vdots&\dots&\vdots \\
  { \langle \bv_1, \z \rangle}^k & { \langle \bv_2, \z \rangle}^k & \dots &  {\langle \bv_N, \z \rangle}^k  \\
\end{pmatrix}
\begin{pmatrix}
D_{\bv_1}(\z)\\
\vdots\\
D_{\bv_N}(\z)
\end{pmatrix}=
\begin{pmatrix}
c_1\\
\vdots\\
c_{k+1}
\end{pmatrix}.
\eeq

Recalling that we  seek to find the vertices of the convex polytope $P$ with these given scaled moments $c_i$, we
 answer this question completely, giving an efficient algorithm to recover the vertices of such an object $P$.

We will treat each $D_{\bv_i}(\z)$ as a nonzero constant, which we have not yet
discovered, and each $ \langle \bv_j, \z \rangle$ as a variable, for a fixed
real vector $\z \in \R^d$.  Moreover, we realize below that, given our
algorithm, only finitely many moments $\mu_1(\z), \dots, \mu_M(\z)$ are needed
in order to completely recover the full vertex set $\V(P)$, a rather useful
fact for applications.

We recall that our $j$'th moment of $P$ is defined, for the uniform measure $\rho \equiv 1$, by
\beq\label{eq:defmomz}
\mu_j (\z) = \int_P  \langle \x, \z \rangle  ^j d\x
\eeq
We note that $\mu_0(\z) = \vol(P)$, the volume of $P$ with respect to the usual
Lebesgue measure.

It is very natural to study moments in this form, because they are ``basis-free" and also appear as moments of inertia in physical applications.
It is worth noting that there are other types of moments in the literature, and we mention some connections here.
For each integer vector $\m$, we define
\beq\label{eq:defmomm}
\mu_{\m} = \int_P  \x^{\m} d\x,
\eeq

\noindent
with the usual convention that $ \x^{\m}=\prod_{i=1}^d x_i^{m_i}$ and $|\m|=m_1+\dots+m_d$.
The usual application of the binomial theorem gives us a trivial relation between these moments:
\begin{equation}
 {\langle \z, \x \rangle}^{k}=
\sum_{\substack{m_1,\dots,m_d:\\ m_1+\dots+m_d=k}}
\begin{pmatrix}
k\\
m_1, \dots, m_d
\end{pmatrix}
z_1^{m_1} \cdots z_d^{m_d}
x_1^{m_1} \cdots x_d^{m_d}.
\end{equation}

In fact, given $\V(P)$ and $\mu_{|\m|}(\z)$ as a function of $\z$, we can also
compute $\mu_{\m}$, as following Lemma shows.   Its proof is trivial , but it nevertheless offers an
interesting relation between the moments  \eqref{eq:defmomz}  and
 \eqref{eq:defmomm}.

%If we know all the $\mu_{\m}$ for a given
%$|\m|:=\sum_i m_i$, it is quite easy to compute $\mu_{|\m|}(\z)$ for any $\z$.

\begin{lemma}\label{lem:momzm}
Let $\m\in\Z_+^d$, $\V(P)$,  and $\mu_{|\m|}(\z)$ be given.  Then
\[ |m|!\mu_{\m}=\frac{\partial^{|\m|}}{\partial \z^{\m}}\mu_{|\m|}(\z). \qedhere\]
\end{lemma}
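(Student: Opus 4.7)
The plan is to differentiate under the integral sign in the defining expression
\[ \mu_{|\m|}(\z) = \int_P \langle\x,\z\rangle^{|\m|}\,d\x, \]
which is legal because the integrand is a polynomial in $\z$ and $P$ is compact, so the partial derivatives commute with the integral without any convergence issue.

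First, I would expand the integrand by the multinomial theorem, writing
\[ \langle\x,\z\rangle^{|\m|} = (x_1 z_1 + \cdots + x_d z_d)^{|\m|} = \sum_{|\mathbf{k}|=|\m|} \binom{|\m|}{k_1,\dots,k_d}\, \x^{\mathbf{k}}\, \z^{\mathbf{k}}. \]
Next, I would apply $\frac{\partial^{|\m|}}{\partial \z^{\m}} = \frac{\partial^{m_1+\cdots+m_d}}{\partial z_1^{m_1}\cdots\partial z_d^{m_d}}$ termwise. Every monomial $\z^{\mathbf{k}}$ with $\mathbf{k}\neq \m$ has some exponent that is either too small (so the derivative annihilates it) or does not match $m_i$ in one coordinate while matching somewhere else only at cost of missing elsewhere; since $|\mathbf{k}|=|\m|$, the only surviving term is $\mathbf{k}=\m$, contributing the factor $\prod_i m_i!$ from differentiating $\z^{\m}$ to a constant.

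Combining this surviving factor with the multinomial coefficient $\binom{|\m|}{m_1,\dots,m_d} = |\m|!/(m_1!\cdots m_d!)$ produces exactly $|\m|!\,\x^{\m}$ inside the integral, and integrating over $P$ yields $|\m|!\,\mu_{\m}$ by the definition in \eqref{eq:defmomm}.

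There is essentially no obstacle here: the only point that needs a moment's attention is justifying the exchange of derivative and integral, and that is immediate because the integrand is a polynomial over a compact domain, so the partial derivatives are uniformly bounded on any bounded set in $\z$-space. Hence the identity follows at once.
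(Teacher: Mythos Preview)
Your argument is correct and is exactly the trivial verification the paper alludes to; the paper itself does not spell out a proof, merely noting that it follows immediately from the multinomial expansion displayed just before the lemma.
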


\section{The inverse moment problem for polytopes -  computing projections of $\V(P)$}
\label{sec:project}

In this section we show how, for a given general position vector $\z$,
to retrieve the
projections $\bil{\bv}{\z}$ for each vertex $\bv$ of $P$, %, for a set of generic
%complex vectors $\z$,
using a certain Hankel matrix that we define below.
%We
%first consider the system of $d$ equations arising from \eqref{brionzero} for
%each $j$, and the system of $2N-d-1$ equations arising from \eqref{brion} for
%each $0\leq j\leq 2N-d-1$.
%We let $N := |V(P)|$, noting that we must find $N$,
%as well as each vertex $\bv_1, \dots, \bv_N$.
For the sake of convenience we
let  $x_i = \bil{\bv_i}{\z}$, for each
$1\leq i\leq N$.  Thus, our goal for this section is to find all $x_i$, given
a number of moments. Due to our choice of $\z$, we may assume that
 $x_i\neq x_j$ for $i\neq j$.  From \eqref{momentmatrix} we have
\beq\label{vanderm}
%\bV(\bil{\bv_1}{\z},\dots,\bil{\bv_N}{\z})\,\bD^t(\z)\,=\,\bc^t =(c_1,\dots,c_{k+1})^t,
\begin{pmatrix}
1&1&\dots&1\\
x_1&x_2&\dots&x_N\\
x^{2}_{1}&x^{2}_{2}&\dots&x^{2}_{N}\\
\vdots&\vdots&\dots&\vdots\\
x^{k}_{1}&x^{k}_{2}&\dots&x^{k}_{N}\\
\end{pmatrix}
\begin{pmatrix}
D_{\bv_1}(\z)\\
\vdots\\
D_{\bv_N}(\z)
\end{pmatrix}=
\begin{pmatrix}
c_1\\
\vdots\\
c_{k+1}
\end{pmatrix}.
\eeq
where $\bc$ is defined by \eqref{c-vector} above. To streamline notation further, we define a
 $(k+1)\times N$ Vandermonde matrix $\bV_k(x_1,\dots,x_N)$, with $ij$'th
entry equal to $x^{i-1}_j$:

\beq
\bV_k (x_1,\dots,x_N)=
\begin{pmatrix}
1&1&\dots&1\\
x_1&x_2&\dots&x_N\\
x^{2}_{1}&x^{2}_{2}&\dots&x^{2}_{N}\\
\vdots&\vdots&\dots&\vdots\\
x^{k}_{1}&x^{k}_{2}&\dots&x^{k}_{N}\\
\end{pmatrix}.
\eeq

We also define a column vector $\bD(\z) ={ (D_{\bv_1}(\z), \ldots,
D_{\bv_N}(\z))}^\top$, so that (\ref{momentmatrix}) reads
\[\bV_k(x_1,\dots,x_N) \cdot  \bD(\z) = \bc .\]

We may multiply both sides of \eqref{vanderm} on the left by
a row vector $\ba=(a_0,a_1,\ldots,a_k)$.  First, we see that
 \[
\ba\cdot\bV_k(x_1,x_2,\ldots,x_N) = (q_{\ba}(x_1),q_{\ba}(x_2),\ldots,q_{\ba}(x_N)),
\quad\text{where $q_{\ba}(t) = \sum_{\ell=0}^k a_\ell t^\ell$}.
\]
Therefore, taking $\ba$ to be the coefficient vector of the polynomial
\beq
p_{\z}(t)=\prod_{i=1}^N (t-x_i)=\prod_{\bv\in\V(P)}(t-\bil{\bv}{\z})=t^N+\sum_{i=0}^{N-1}a_i t^i,
\eeq
and multiplying  \eqref{vanderm} by $\ba$, we obtain the identity $ 0 = \ba \cdot \bc$. Moreover, for each $0 \le \ell \le k-N$ we substitute for $\ba$ the vector  $\ba_{\ell}$ corresponding to $t^{\ell}p_{\z}(t)$,  to obtain zero in \eqref{vanderm}, when multiplying on the left by $\ba_{\ell}$.   We thus obtain $k-N+1$ equations of the form
\beq\label{easydot}
\ba_{\ell}\cdot\bc=0.
\eeq
%Let $\bH:=\bH(\bc)$ be the Hankel matrix of $\bc=(c_1,\dots,c_{2m-1})$,
%where $m\geq N+1$.

%$\bV:=\bV_{2m-2}(x_1,\dots,x_N)$ has rank $N$.
As $\ell$ increases, the coefficient vector of $t^{\ell}p_{\z}(t)$ gets shifted
to the right, and it is convenient to capture all of its shifts
simultaneously by the  $m\times m$  Hankel matrix   $\bH:=\bH
(c_1,\dots,c_{2m-1})$, where we fix $m\ge N+1$, defined by:

\beq
\bH (c_1,\dots,c_{2m-1})=
\begin{pmatrix}
c_1&c_2&\dots&c_m\\
c_2&c_3&\dots&c_{m+1}\\
\vdots&\vdots&\dots&\vdots\\
c_m&c_{m+1}&\dots&c_{2m-1}
\end{pmatrix}.
\eeq

\begin{theorem} \label{hsol}
The Hankel matrix $\bH$ has rank $N$ and
its kernel is spanned by the  $m-N$ linearly independent vectors
\beq%\label{eq:aell}
\ba_\ell=(\underbrace{0,\dots,0}_{\ell\ \text{ times}},a_0,\dots,a_{N-1},1,
\underbrace{0,\dots,0}_{m-N-1-\ell}), \quad 0\leq\ell \leq m-N-1,
\eeq
\end{theorem}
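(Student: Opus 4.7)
The plan is to exhibit a Vandermonde factorization of $\bH$ that exposes both the rank and the kernel simultaneously. Writing out the $(i,j)$ entry of $\bH$ and substituting $c_{i+j-1}=\sum_{k=1}^N x_k^{i+j-2} D_{\bv_k}(\z)$ from \eqref{vanderm}, I would first observe the identity
\[
\bH \;=\; \bV_{m-1}(x_1,\dots,x_N)\cdot \mathrm{diag}\bigl(\bD(\z)\bigr)\cdot \bV_{m-1}(x_1,\dots,x_N)^\top,
\]
which is the $d$-dimensional analogue of the classical Vandermonde factorization of a finite-rank Hankel matrix. Because $\z$ is in general position, the projections $x_i=\bil{\bv_i}{\z}$ are pairwise distinct, so the $m\times N$ Vandermonde factor has full column rank $N$ (using $m\ge N+1$). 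Similarly general position guarantees $D_{\bv_i}(\z)\ne 0$ for every vertex, so the diagonal factor is invertible. Rank additivity then forces $\mathrm{rk}(\bH)=N$.

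For the kernel, the invertibility of the middle diagonal factor yields
\[
\ker(\bH)\;=\;\ker\bigl(\bV_{m-1}(x_1,\dots,x_N)^\top\bigr).
\]
A vector $\ba=(a_0,\dots,a_{m-1})$ lies in this kernel precisely when the polynomial $q_{\ba}(t)=\sum_{j=0}^{m-1} a_j t^j$ vanishes at each of the $N$ distinct points $x_1,\dots,x_N$. Equivalently, $q_{\ba}(t)$ must be a polynomial multiple of $p_{\z}(t)=\prod_{i=1}^N(t-x_i)$ of degree at most $m-1$. The space of such multiples has dimension exactly $m-N$, matching the corank of $\bH$ computed in the previous step, and a natural basis is given by $p_{\z}(t),\,t\,p_{\z}(t),\dots,t^{m-N-1}p_{\z}(t)$.

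Reading off the coefficient vectors of these basis polynomials yields precisely the shifted vectors $\ba_\ell$ displayed in the statement, so they span $\ker(\bH)$; their linear independence is immediate from the staircase of leading $1$'s in positions $N,N+1,\dots,m-1$.

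The only delicate point to verify carefully is the factorization itself, which reduces to checking that the padding zeros in the first $d$ coordinates of $\bc$ (from the definition \eqref{c-vector}) are compatible with the companion identities \eqref{brionzero}; once $c_i=\sum_k x_k^{i-1}D_{\bv_k}(\z)$ is established for all $1\le i\le 2m-1$, both the rank count and the kernel description fall out mechanically. I do not expect any genuine obstacle beyond this bookkeeping.
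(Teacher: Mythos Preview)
Your proof is correct. The Vandermonde factorization $\bH=\bV_{m-1}\,\mathrm{diag}(\bD(\z))\,\bV_{m-1}^\top$ does follow from $c_i=\sum_k x_k^{i-1}D_{\bv_k}(\z)$ (which is exactly what \eqref{vanderm} encodes, including the first $d$ zeros via \eqref{brionzero}), and once you have it the rank and kernel claims are immediate: $\bV_{m-1}$ injective on columns plus $\mathrm{diag}(\bD(\z))$ invertible gives $\ker\bH=\ker\bV_{m-1}^\top$, and the latter is visibly the coefficient space of degree $\le m-1$ multiples of $p_\z$.

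The paper takes a different route. Rather than writing down the factorization, it first checks $\ba_\ell\in\ker\bH$ directly from \eqref{easydot}, and then argues by contradiction that no shorter vector $\bb'=(b_0,\dots,b_{L-1},1,0,\dots,0)$ with $L<N$ can lie in the kernel: feeding the shifts of such a $\bb'$ back through \eqref{vanderm} and peeling off an invertible square Vandermonde block yields $p_{\bb}(x_i)D_{\bv_i}(\z)=0$ for all $i$, impossible since $\deg p_{\bb}<N$ and all $D_{\bv_i}(\z)\ne 0$. Your approach is the cleaner, textbook route (and is in fact the ``Vandermonde factorization of finite rank Hankel matrices'' alluded to in the introduction); the paper's argument is more hands-on and avoids stating the factorization explicitly, at the cost of a slightly longer contradiction step. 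Substantively the two arguments use the same ingredients---distinctness of the $x_i$ and nonvanishing of the $D_{\bv_i}(\z)$---so neither yields more than the other.
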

\proof
For each $\ell$ in the range  $0\le\ell\le 2m-2-N$ we use
\eqref{easydot}, with a vector $\ba_{\ell}$ of length $2m-1$.  Putting all of
these equations together in a more compact form, we can write
$\ba_{\ell}\bH=0$, where now the length of $\ba_{\ell}$ is $m$.

It remains to show that the vectors $\ba_\ell$ generate the full kernel $\K$ of
$\bH$.  Let $\bb'\in\K$.  Without loss of generality,  there exists   an $L<N$
such that $\bb'=(b_0,\dots,b_{L-1},1,0,\dots,0)$, because we may use the
various vectors $\ba_\ell$, which lie in the kernel of $\bH$ to get the
appropriate zeros in this $\bb'$ vector.

Now let us define a number of row vectors of the size $2m-1$:

\beq%\label{eq:aell}
% which of these eq:aell labels is the right one?
\bb_\ell=(\underbrace{0,\dots,0}_{\ell\ \text{ times}},b_0,\dots,b_{L-1},1,
\underbrace{0,\dots,0}_{2m-2-L-\ell}), \quad 0\leq\ell \leq 2m-L-2.
\eeq

By definition of the Hankel matrix and since $m>L+1$, we have
$\bb_\ell\cdot\bc=0$.  Consider the polynomial
$p_\bb(t)=b_0+b_1t+\ldots+b_{L-1}t^{L-1}+t^L$ corresponding to $\bb_0$.

Taking $k=2m-2$ in \eqref{vanderm}, we multiply both sides of \eqref{vanderm} on the left by $\bb_\ell$.
Hence, we get $\bb_\ell\cdot\bV_{2m-2}(x_1,\dots,x_N)\cdot\bD=0$. Therefore, for every $0\leq\ell \leq 2m-L-2$ we have
\beq
(x_{1}^{\ell}p_{\bb}(x_1),\ldots,x_{N}^{\ell}p_{\bb}(x_N))\cdot\bD=0.
\eeq

Combining the first $N$ of the latter equations into a matrix form (note that $N-1<2m-L-2$) we get:

\[
\begin{pmatrix}
p_{\bb}(x_1)&\dots&p_{\bb}(x_N)\\
x_1p_{\bb}(x_1)&\dots&x_Np_{\bb}(x_N)\\
x_{1}^{2}p_{\bb}(x_1)&\dots&x_{N}^{2}p_{\bb}(x_N)\\
\vdots&\dots&\vdots\\
x_{1}^{N-1}p_{\bb}(x_1)&\dots&x_{N}^{N-1}p_{\bb}(x_N)\\
\end{pmatrix}
\begin{pmatrix}
D_{\bv_1}(\z)\\
\vdots\\
D_{\bv_N}(\z)
\end{pmatrix}=
\begin{pmatrix}
0\\
\vdots\\
0
\end{pmatrix},
\]

which can be rewritten as

\[
\begin{pmatrix}
1&\dots&1\\
x_1&\dots&x_N\\
x_{1}^{2}&\dots&x_{N}^{2}\\
\vdots&\dots&\vdots\\
x_{1}^{N-1}&\dots&x_{N}^{N-1}\\
\end{pmatrix}
\begin{pmatrix}
p_{\bb}(x_1)&0&\dots&0\\
0&p_{\bb}(x_2)&\dots&0\\
\vdots&\vdots&\dots&\vdots\\
0&0&\dots&p_{\bb}(x_N)\\
\end{pmatrix}
\begin{pmatrix}
D_{\bv_1}(\z)\\
\vdots\\
D_{\bv_N}(\z)
\end{pmatrix}=
\begin{pmatrix}
0\\
\vdots\\
0
\end{pmatrix}.
\]

Since $\bV_{N-1}(x_1,\dots,x_N)$ is invertible, we get

\[
\begin{pmatrix}
p_{\bb}(x_1)D_{\bv_1}(\z)\\
\vdots\\
p_{\bb}(x_N)D_{\bv_N}(\z)
\end{pmatrix}=
\begin{pmatrix}
0\\
\vdots\\
0
\end{pmatrix}.
\]

As $L<N$ and $p_{\bb}(t)\neq 0$, we deduce that $x_1,\ldots,x_N$ cannot all be
roots of $p_{\bb}(t)$.  It remains to mention that
$D_{\bv_i}(\z)\neq 0$ for every $1\le i\le N$, by the choice of the vector
$\z$ in general position.  We
therefore arrive at a contradiction.  \qed

%Consider the polynomial
%\[p_\bb(t)=t^L+\sum_{i=0}^{L-1} b_i t^i=\prod_{j=1}^L (t-\lambda_j).\]
%If the set of roots $\lambda_j$ of $p_\bb$ contains $\{x_1,\dots,x_N\}$
%then $p_\z$ divides $p_\bb$, implying that $\bb$ is a linear combination
%of the vectors $\ba_\ell$. So we can assume that $t-x_i$
%does not divide $p_\bb$.

%Note that for $0\leq s\leq N$ the vector $\bb^s$
%of coefficients of $t^s p_\bb(t)$,
%padded on the right by zeros, lies in $K$.
%Thus this also holds for the (padded on the right by zeros)
%vector of coefficients of $p_\bb(t)q(t)$,
%for any polynomial $q(t)$ of degree at most $N$.
%In particular, the padded on the right by zeros vector $\bb'$
%of coefficients of $p_\bb(t)p_\z(t)/(t-x_i)$  lies in $K$.
%On the other hand, compute the product $\tilde{\bb}'\bV$
%of the vector
%$\tilde{\bb}'=(\bb',0,\dots,0)^\top$ with $\bV$. It has exactly
%one nonzero coordinate, $i$. But $D_{v_i}(z)\neq 0$.
%Thus $\tilde{\bb}'\bV (D_{v_1}(z),\dots,D_{v_N}(z))\neq 0$,
%whereas $\tilde{\bb}'\bc=0$. Thus \eqref{vanderm} cannot hold.
%This is a contradiction, showing that $\bb$ is a linear combination
%of $\ba_\ell$.

Once we construct the kernel of $\bH$, we will pick a vector $\left(a_0, \ldots, a_{N-1}, 1, 0, \ldots, 0   \right)$ in $\K$, and then define
the polynomial
$p_\z(t) = a_0 + a_1t + \ldots + a_{N-1} t^{N-1} + t^N$.
We note that this is the unique vector with the
largest number of zeros on the right (in algebraic terms, all the remaining
vectors in the kernel can be obtained as coefficients of polynomials
in the principal ideal $(p_\z)\subset \C[t]$).
By Theorem~\ref{hsol}, the roots $x_i \,(= \bil{\bv_i}{\z})$
of this polynomial are precisely the projections $\bil{\bv_i}{\z}$
that we are seeking.
\beq\label{eq:pa}
p_\z(t) = a_0 + a_1t + \ldots + a_{N-1} t^{N-1} + t^N=
\prod_{\bv\in \V(P)} (t-\bil{\bv}{\z}).
\eeq

In summary we have proved the following:
\begin{theorem}
\label{summary1}
Given the moments (\ref{eq:defmomz}) for a direction $\z\in\R^d$ in  general position,
all the projections $\bil{\bv}{\z}$, $\bv\in\V(P)$ are the real roots of the univariate polynomial
$p_\z$ defined in (\ref{eq:pa}).
\end{theorem}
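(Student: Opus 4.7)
The statement is essentially a clean repackaging of what has already been established, so the plan is to assemble the pieces from Theorem~\ref{hsol} and equation~(\ref{momentmatrix}) rather than to introduce new machinery. First, I would form the scaled moment vector $\bc$ as in (\ref{c-vector}) from the given moments $\mu_j(\z)$, and build the Hankel matrix $\bH = \bH(c_1,\dots,c_{2m-1})$ for any $m \ge N+1$. By Theorem~\ref{hsol}, the kernel $\K$ of $\bH$ has dimension $m-N$ and is spanned by the shifted vectors $\ba_\ell$ whose nonzero entries are the coefficients $(a_0,\dots,a_{N-1},1)$ of the polynomial $p_\z(t) = \prod_{\bv\in\V(P)}(t-\bil{\bv}{\z})$.

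Next, I would argue that the vector $(a_0,\dots,a_{N-1},1,0,\dots,0)\in\K$ with the greatest number of trailing zeros is uniquely determined. Indeed, if we view $\K$ as the set of coefficient vectors of polynomials of degree at most $m-1$, then Theorem~\ref{hsol} identifies $\K$ with the set $\{t^\ell p_\z(t) : 0 \le \ell \le m-N-1\}$ and their linear combinations, i.e., with the truncation of the principal ideal $(p_\z) \subset \C[t]$. The element of smallest degree in this ideal is $p_\z$ itself (up to scalar), and normalizing the leading coefficient to $1$ makes it unique. This identifies the extracted polynomial with $p_\z$.

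Finally, by the very definition of $p_\z$ in (\ref{eq:pa}), its roots are exactly $\{\bil{\bv}{\z} : \bv \in \V(P)\}$, and these are all real by construction since $\z \in \R^d$. The general position hypothesis on $\z$ enters twice: to ensure that all $D_{\bv}(\z)$ are nonzero (so the proof of Theorem~\ref{hsol} goes through and $\bH$ indeed has rank $N$), and to ensure that the projections $\bil{\bv_i}{\z}$ are pairwise distinct (so that $p_\z$ has exactly $N$ simple roots, matching $|\V(P)|$).

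The main potential obstacle is conceptual rather than computational: one must be careful that the uniqueness of the minimal-degree kernel vector genuinely singles out $p_\z$ and not merely some divisor of it. This is handled by the rank-$N$ statement in Theorem~\ref{hsol}: since any polynomial $q$ with $q(\bil{\bv}{\z}) = 0$ for all $\bv\in\V(P)$ must be divisible by $p_\z$ (as all roots are distinct), and conversely any multiple of $p_\z$ of degree at most $m-1$ gives a kernel vector, the ideal description is tight and the minimal monic representative is exactly $p_\z$.
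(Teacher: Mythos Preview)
Your proposal is correct and follows essentially the same route as the paper: the theorem is stated there as a summary (``In summary we have proved the following''), and the paper's argument is precisely to invoke Theorem~\ref{hsol}, extract the unique kernel vector with the most trailing zeros, identify it with the coefficient vector of $p_\z$, and read off the roots. Your additional remarks on why the minimal-degree kernel element is exactly $p_\z$ (via the principal ideal description) and on where the general-position hypothesis is used are accurate elaborations of points the paper makes more tersely in the paragraph preceding the theorem.
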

Finding the kernel of $\bH$ and then computing the coefficients of $p_\z(t)$
can be done efficiently in polynomial time.
After having computed the projections onto $\z$ of all the vertices,
the next step is to find the
projections on each of the $d$ coordinates of all $N$ vertices of $P$.  However,
there is still an inherent ambiguity in this process because we will not know
from which vertex a specific projection came from.  We resolve this problem in
Section~\ref{sec:algorithm} and also in alternative way in Section~\ref{sec:univar}
by using univariate representations.

\begin{rem}~~
\begin{itemize}
\item[(a)] An analogue of \BrLa\ formula for $d=2$ was known for quite a long time
(see e.g. P.~Davis \cite{MR0167602}), and the system of equations corresponding to
\eqref{vanderm} was solved by what is known as Prony's method, see
e.g. Elad, Milanfar, and Golub \cite{MR1740393,EMG04}.
The solution method described above can also be
considered as a variation of the Prony's method.
\item[(b)] Importantly, the quantities $D_{\bv_i}(\z)$ play no role
for computing the projections $\bil{\bv_i}{\z})$! This is why we will be able to
extend the present methodology to general convex
polytopes $P$ (i.e., non necessarily simple).
\end{itemize}
\end{rem}
\section{Polynomial density}
\label{sec:polymeasure}

In this section we address the case of non-uniform measures. That is, our moments are now
defined as

\begin{equation}
\label{eq:defmomzrho}
\mu_j (\z) = \int_P  \langle \x, \z \rangle  ^j \rho(x) d\x,
\end{equation}
where the density function $\rho$ is a homogeneous polynomial of fixed known degree $\de$.
We note that, intuitively, if $\rho$ is not a homogeneous polynomial the change of a physical scale
(e.g. meters to centimeters) will cause  complicated changes in the formulas for moments.
Therefore, the case of a homogeneous polynomial measure is a very natural one, and we begin with this
case in order to develop the proper formulas for it.   We then notice, in the next subsection, that the results for the general case of a polytope with any polynomial
density follows exactly the same analysis as the case of  the homogeneous density.

To set notation, we let $P$ be a convex polytope with a density function $\rho(\x)$.  We separate $\rho$ into its homogeneous polynomial pieces, by writing
$\rho(\x)=\sum_{s=0}^{\de}\rho_{s}(\x)$, where $\rho_{s}(\x)$ is a homogeneous polynomial of degree $s$.     We will require the physically natural assumption that
$\rho(x)>0$ for each $x \in P$, and in fact we will only need the assumption $\rho(\bv)\neq 0$ for $\bv\in\V(P).$

We define $\bV_k=\bV_k(\bil{\bv_1}{\z},\ldots,\bil{\bv_N}{\z})=\bV_k(x_1,\ldots,x_N)$, the standard Vandermonde matrix.
We further define the $l$'th derivative of the Vandermonde matrix, namely $\bV_{k}^{(l)}$, whose $ij$'th entry is equal to $(i-1)\cdot(i-2)\cdot\ldots\cdot(i-l)x^{i-1-l}_j$:

\beq
\bV_{k}^{(l)}(x_1,\dots,x_N)=
\begin{pmatrix}
0&0&\dots&0\\
\vdots&\vdots&\dots&\vdots\\
0&0&\dots&0\\
l!&l!&\dots&l!\\
\vdots&\vdots&\dots&\vdots\\
\frac{k!}{(k-l)!}x^{k-l}_{1}&\frac{k!}{(k-l)!}x^{k-l}_{2}&\dots&\frac{k!}{(k-l)!}x^{k-l}_{N}\\
\end{pmatrix}.
\eeq

As mentioned above, we first assume here that $\rho(\x)$ is a homogeneous polynomial of degree $\de$.
However, in the following subsection we will discuss how the following formulas also work in the
more general case of variable but non-homogeneous polynomial density measures.
We recall the moment formulas for variable density, for a simple polytope $P$, from Theorem \ref{th:MomentsFormula.density} in the Appendix:

\beq \label{diff_equation2}
\mu_j (\z)=
\frac{j! (-1)^d}{ (j+d+\de)!}
 \sum_{\bv\in\V(P)}
  \rho \left(\frac{\partial}{\partial z_1},\dots,\frac{\partial}{\partial z_d}\right)
 \bil{\bv}{\z}^{j+d+\de} D_\bv(\z),
\eeq
where
\beq
D_\bv(\z):=
\frac{    | \det K_{\bv}  |   }
{  \prod_{k=1}^d   \langle \w_k(\bv), \z \rangle  },
\eeq
and the identity is valid for each $\z \in \C^d$ such that the denominators in $D_\bv(\z)$ do not vanish.   In addition, we also have the following companion identities:
\beq
0=\rho\left(\frac{\partial}{\partial z_1},\ldots,\frac{\partial}{\partial z_d}\right)
\sum_{\bv\in\V(P)}
 \bil{\bv}{\z}^{j}   D_\bv(\z),
\eeq
for each $ 0 \leq j \leq d + \de -1$.

We now repeat the same procedure of putting the new moment formulas above into matrix form, as in \eqref{c-vector}.  Here, the definition of the vector $\bc$ is only slightly different, namely:

\beq\label{c-vector_m}
\left (c_1, \dots, c_{k+1} \right) = (-1)^d\left(0, \dots, 0, \frac{(d+\de)!}{0!} \mu_0,  \frac{(1+d+\de)!}{1!} \mu_1, \dots, \frac{k! \cdot \mu_{k-d-\de}}{(k-d-\de)!} \right).
\eeq

We arrive at the following interesting matrix ODE for moments with homogeneous polynomial density:

\beq\label{momentmatrix_m}
%\bV(\bil{\bv_1}{\z},\dots,\bil{\bv_N}{\z})\,\bD^t(\z)\,=\,\bc^t =(c_1,\dots,c_{k+1})^t,
\rho\left(\frac{\partial}{\partial z_1},\ldots,\frac{\partial}{\partial z_d}\right)
\left[
\begin{pmatrix}
1&1&\dots&1\\
 \langle \bv_1, \z \rangle &  \langle \bv_2, \z \rangle & \dots &  \langle \bv_N, \z \rangle   \\
{ \langle \bv_1, \z \rangle}^2 & { \langle \bv_2, \z \rangle}^2 & \dots &  {\langle \bv_N, \z \rangle}^2  \\
\vdots&\vdots&\dots&\vdots \\
  { \langle \bv_1, \z \rangle}^k & { \langle \bv_2, \z \rangle}^k & \dots &  {\langle \bv_N, \z \rangle}^k  \\
\end{pmatrix}
\begin{pmatrix}
D_{\bv_1}(\z)\\
\vdots\\
D_{\bv_N}(\z)
\end{pmatrix}\right]=
\begin{pmatrix}
c_1\\
\vdots\\
c_{k+1}
\end{pmatrix},
\eeq

\noindent
where the differentiation is taken separately for each entry of the vector
on the left hand side.

One may check that a single partial derivative of a matrix product obeys the same rule
as the derivative of a product of two functions, that is
$\frac{\partial}{\partial x} (M_1(x)\cdot M_2(x))=
\frac{\partial}{\partial x} M_1(x)\cdot M_2(x))+ M_1(x)\cdot \frac{\partial}{\partial x} M_2(x).$

We compute a partial derivative $\frac{\partial }{\partial z_i}$ of $\bV_{k}(\bil{\bv_1}{\z},\ldots,\bil{\bv_N}{\z})$:

\begin{align*}
%\bV(\bil{\bv_1}{\z},\dots,\bil{\bv_N}{\z})\,\bD^t(\z)\,=\,\bc^t =(c_1,\dots,c_{k+1})^t,
\frac{\partial }{\partial z_i}\bV_{k} &=
\begin{pmatrix}
0&\dots&0\\
\bv_{1}^{(i)} & \dots &  \bv_{N}^{(i)} \\
2\bv_{1}^{(i)}\langle \bv_1, \z \rangle  & \dots & 2\bv_{N}^{(i)}\langle \bv_N, \z \rangle \\
\vdots&\dots&\vdots \\
k\bv_{1}^{(i)}{\langle \bv_1, \z \rangle}^{k-1} & \dots &  k\bv_{N}^{(i)}{\langle \bv_N,\z\rangle}^{k-1} \\
\end{pmatrix}  \\
&=
\bV_{k}^{(1)}
\cdot
\begin{pmatrix}
\bv_{1}^{(i)} & 0 & \dots & 0\\
0 & \bv_{2}^{(i)} & \dots &  0 \\
\vdots&\vdots&\dots&\vdots \\
0 & 0 & \dots &  \bv_{N}^{(i)}\\
\end{pmatrix}.
\end{align*}

By repeating the partial derivative in each variable $z_i$, we arrive at:

\beq
%\bV(\bil{\bv_1}{\z},\dots,\bil{\bv_N}{\z})\,\bD^t(\z)\,=\,\bc^t =(c_1,\dots,c_{k+1})^t,
\rho\left(\frac{\partial}{\partial z_1},\dots,\frac{\partial}{\partial z_d}\right)\bV_{k}
=
\bV_{k}^{(\de)}
\cdot
\begin{pmatrix}
\rho(\bv_{1}) & 0 & \dots & 0\\
0 & \rho(\bv_{2}) & \dots &  0 \\
\vdots&\vdots&\dots&\vdots \\
0 & 0 & \dots &  \rho(\bv_{N})\\
\end{pmatrix}.
\eeq

Now expanding the matrix ODE formula \eqref{momentmatrix_m}, and using the product rule for differentiation of matrices, we may write it in the following form:

\beq\label{vanderm_m}
%\bV(\bil{\bv_1}{\z},\dots,\bil{\bv_N}{\z})\,\bD^t(\z)\,=\,\bc^t =(c_1,\dots,c_{k+1})^t,
\sum_{i=0}^{\de}\bV_{k}^{(i)}
\cdot
\begin{pmatrix}
f_{1}^{(i)}(\z)\\
\vdots\\
f_{N}^{(i)}(\z)
\end{pmatrix}
=
\begin{pmatrix}
c_1\\
\vdots\\
c_{k+1}
\end{pmatrix},
\eeq
where each entry $f_{j}^{(i)}(\z)$ is a rational function of $\z$, and the highest vector term, comprised of the rational functions $f_{j}^{(\de)}(\z)$, has the nice form
\beq\label{highest_t}
\begin{pmatrix}
f_{1}^{(\de)}(\z)\\
\vdots\\
f_{N}^{(\de)}(\z)
\end{pmatrix}
=
\begin{pmatrix}
\rho(\bv_{1}) & 0 & \dots & 0\\
0 & \rho(\bv_{2}) & \dots &  0 \\
\vdots&\vdots&\dots&\vdots \\
0 & 0 & \dots &  \rho(\bv_{N})\\
\end{pmatrix}
\cdot
\begin{pmatrix}
D_{\bv_1}(\z)\\
\vdots\\
D_{\bv_N}(\z)
\end{pmatrix}.
\eeq

For the proof of the following theorem we construct a certain vector as follows.   Define the polynomial
\beq
\label{case2}
p_{\z}(t)=\prod_{\bv\in\V(P)}(t-\bil{\bv}{\z})^{\de+1}=t^{N(\de+1)}+\sum_{i=0}^{(\de+1)N-1}a_i t^i,
\eeq
We define the vector  $\ba_{\ell}$ to be the coefficient vector of the polynomial $t^{\ell} p_{\z}(t)$.

\begin{theorem}
The Hankel $m\times m$ matrix $\bH$, with $m\geq (\de+1) N + 1$ corresponding to the moment formulas with variable density,
has rank $(\de +1)N$, and its kernel is spanned by the linearly independent vectors $\ba_{\ell}$.
\end{theorem}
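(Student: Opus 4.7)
The plan is to mirror the two-step structure of the proof of Theorem~\ref{hsol}. The first step shows that every $\ba_\ell$ lies in $\K$; the second, that $\K$ has dimension exactly $m-(\de+1)N$, forcing the $\ba_\ell$ to span it. The key new ingredient is that $p_\z(t)$ now carries each root $x_j:=\bil{\bv_j}{\z}$ with multiplicity $\de+1$, so the relevant vanishing assertions involve derivatives of $p_\z$ up through order $\de$, and we have to contend with the full sum over $s$ in (\ref{vanderm_m}) rather than a single Vandermonde block.

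For the first step, I use the observation that for any coefficient vector $\ba=(a_0,\dots,a_k)$ of a polynomial $q(t)$, the $j$-th entry of the row $\ba\cdot\bV_{k}^{(s)}$ is exactly $q^{(s)}(x_j)$. Applied to $\ba_\ell$, whose polynomial $t^\ell p_\z(t)$ vanishes to order $\ge\de+1$ at every $x_j$, this makes $\ba_\ell\cdot\bV_{k}^{(s)}=0$ as a row vector for every $0\le s\le\de$. Left-multiplying (\ref{vanderm_m}) by $\ba_\ell$ therefore annihilates every block on the left, yielding $\ba_\ell\cdot\bc=0$. Varying $\ell$ and rephrasing in Hankel-matrix language gives $\ba_\ell\,\bH=0$ for $\ell=0,\dots,m-(\de+1)N-1$; linear independence follows from their staggered leading ones.

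For the second step I favor the generating-function viewpoint, which cleanly pins down the rank. Expanding $\rho(\partial_\z)$ termwise in (\ref{diff_equation2}) via the multivariate Leibniz rule, and using $\partial_\z^\beta \bil{\bv}{\z}^{n}=\bv^\beta\,\frac{n!}{(n-|\beta|)!}\,\bil{\bv}{\z}^{n-|\beta|}$, one checks that
\[
c_{n+1}\;=\;\sum_{\bv\in\V(P)} R_\bv(n)\,x_\bv^{\,n},\qquad R_\bv\in\R[n],
\]
with $\deg R_\bv\le\de$. The leading $n^\de$ coefficient of $R_\bv$ arises only from the Leibniz term in which all $\de$ derivatives fall on the factor $\bil{\bv}{\z}^n$, and equals a nonzero scalar multiple of $\rho(\bv)\,D_\bv(\z)$; both are nonzero by the standing hypothesis $\rho(\bv)\ne 0$ and the general-position choice of $\z$. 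Hence $\deg R_\bv=\de$ exactly, and the classical theory of polynomial-times-exponential sequences forces the minimal annihilating polynomial of $(c_n)$ to be $\prod_\bv(t-x_\bv)^{\de+1}=p_\z(t)$, of degree $(\de+1)N$. Therefore $\rk(\bH)=(\de+1)N$ and $\dim\K=m-(\de+1)N$, so the $\ba_\ell$ indeed span $\K$.

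The main obstacle is the nonvanishing of the top coefficient of each $R_\bv$: one must rule out cancellation among the many Leibniz contributions at top $n$-degree. This is ultimately a local calculation at each vertex, hinging precisely on $\rho(\bv)\ne 0$ and on $\z$ being in general position. As a backup I would fall back on the direct route that mirrors Theorem~\ref{hsol}: assume a kernel vector $\bb'$ whose polynomial has degree $L<(\de+1)N$, apply its shifts to (\ref{vanderm_m}) with $k=2m-2$ to produce a confluent-Vandermonde system in the $(\de+1)N$ unknowns $f_j^{(s)}(\z)$, and show that its invertibility forces the top block $f_j^{(\de)}(\z)=\rho(\bv_j)\,D_{\bv_j}(\z)$ from (\ref{highest_t}) to vanish --- the required contradiction.
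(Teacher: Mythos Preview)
Your Step~1 coincides with the paper's argument. Step~2, however, takes a genuinely different route.

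The paper argues constructively: given a putative kernel vector $\bb$ with polynomial $p_\bb$ of degree $L<(\de+1)N$, it locates a vertex $\bv_1$ at which $(t-x_1)^{\de+1}\nmid p_\bb$, and then multiplies $p_\bb$ by suitably chosen linear factors $(t-x_j)$ to obtain a polynomial $q$ that (i) is a polynomial multiple of $p_\bb$, hence its coefficient vector is a combination of shifts of $\bb$ and therefore still annihilates $\bc$; (ii) vanishes to order $\ge\de+1$ at every $x_j$ with $j\ge 2$; and (iii) vanishes to order exactly $\de$ at $x_1$. Left-multiplying \eqref{vanderm_m} by the coefficient vector of $q$ then kills every block $\bV_k^{(s)}$ with $s<\de$ and in the top block isolates the single nonzero term $q^{(\de)}(x_1)\,\rho(\bv_1)\,D_{\bv_1}(\z)$, contradicting $\bb^o\cdot\bc=0$. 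No structure theory of Hankel matrices is invoked.

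Your route instead identifies $(c_{n+1})$ as a polynomial--exponential sequence $\sum_\bv R_\bv(n)\,x_\bv^{\,n}$, pins down $\deg R_\bv=\de$ via the Leibniz top term, and then cites the Kronecker/Prony rank formula for Hankel matrices. This is shorter and conceptually clean, at the cost of importing that external fact. One caveat: the ``nonzero scalar'' in your leading coefficient is $x_\bv^{-\de}$, so your computation tacitly needs $x_\bv=\bil{\bv}{\z}\ne 0$, which is \emph{not} part of the paper's definition of general position; the paper's own argument does not require this. It is easily repaired (translate $P$, or enlarge the genericity hypothesis), but you should say so. Your backup sketch via a confluent-Vandermonde system is related in spirit to the paper's approach but not the same: the paper does not invert a full confluent system; it builds the single polynomial $q$ that picks out one entry of the top block.
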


\proof
We  repeat the procedure that we used in Section   \ref{sec:project}, using a corresponding $m\times m$ Hankel matrix and its kernel,
but this time the dimension is $m \geq (\de+1) N + 1$.
Similarly to the case of uniform density $\rho(x)=1$, we may again multiply both sizes of \eqref{vanderm_m}
on the left by a row vector $\ba_0=(a_0,a_1,\ldots,a_k)$.  First, we see that
\[
\ba_0\cdot\bV_{k}^{(i)}(x_1,x_2,\ldots,x_N) = (p_{\z}^{(i)}(x_1),p_{\z}^{(i)}(x_2),\ldots,p_{\z}^{(i)}(x_N)),
\]
where $p_{\z}^{(i)}(t)$ is $i$'th derivative of $p_{\z}(t).$

Now, multiplying  \eqref{vanderm_m} by $\ba_0$, we obtain the identity $0 = \ba_0 \cdot \bc$.
Similarly, for each $0 \le \ell \le k-N(\de+1)$, we substitute for $\ba_0$ the vector  $\ba_{\ell}$
corresponding to $t^{\ell}p_{\z}(t)$, to obtain $0 = \ba_{\ell} \cdot \bc$.  Hence the vector $\ba_0$ lies in the kernel of $\bH$.

On the other hand, we now claim that no other vector $\bb$ different from those spanned by $\ba_{\ell}$ could be in $\K$. If, contrary to hypothesis, we could find such a vector $\bb$, we may assume without loss of generality that $\bb=(b_0,\dots,b_{l},1,0,\dots,0)$, with $l<(\de+1)N-1$, by reducing it with appropriate linear combinations of $\ba_{\ell}$.

Recall that $\bc=(c_1,\dots,c_{k+1})$, where $k\ge 2m-2\ge 2(\de+1)N.$ Let us consider polynomial $p_{\bb}(t)$
with coefficients of $\bb$. Now let $\bb_{\ell}$ corresponds to the polynomials $t^{\ell}p_{\bb}(t)$, for $0\le\ell\le m$.
Then we have $\bb_{\ell}\cdot \bc=0$, because $\bb$ is in the $\K$ and each vector $\bb_{\ell}$ has the same entries
as $\bb$ only shifted by $\ell$ to the right.

Since a degree of $p_{\bb}(t)$ is smaller than that of $p_{\z}(t)$, we have $p_{\z}(t)\nmid p_{\bb}(t)$. Therefore, there exists
$\bv\in\V(P)$ such that $(t-\bil{\z}{\bv})^{\de+1}\nmid p_{\bb}(t)$.   Without loss of generality, we may assume that $\bv = \bv_1$.
We now construct a polynomial $q(t)$ by multiplying $p_{\bb}(t)$ by sufficiently many linear factors of the form $(t-\bil{\z}{\bv})$,
where $\bv$ varies over all of the vertices of $\V(P)$.  We will treat the particular vertex $\bv_1$ differently, by multiplying by a slightly different power of
 $(t-\bil{\z}{\bv_1})$, to insure that a certain derivative, explicated below, does not vanish at $x_1$, thus giving us a nonzero vector in the kernel of $\bf H$.
 The desired polynomial $q(t)$ satisfies the following properties:

\begin{enumerate}
\item $p_{\bb}(t)|q(t)$.
\item \label{item:prop2}$ (t-\bil{\z}{\bv_1})^{\de+1}\nmid q(t)$.
\item $(t-\bil{\z}{\bv_1})^{\de}\mid q(t)$.
\item $(t-\bil{\z}{\bv})^{\de+1}\mid q(t)$, for $\forall\bv\in\V(P):\bv\neq \bv_1$.
\item $\deg(q)\le\deg(p)+N(\de+1).$
\end{enumerate}

We now write the coefficients of polynomial $q(t)$ as a vector $\bb^o$.   Next, we multiply
\eqref{vanderm_m} on each side by the row vector $\bb^o$.

\beq
%\bV(\bil{\bv_1}{\z},\dots,\bil{\bv_N}{\z})\,\bD^t(\z)\,=\,\bc^t =(c_1,\dots,c_{k+1})^t,
\sum_{i=0}^{\de}\bb^o\cdot\bV_{k}^{(i)}
\cdot
\begin{pmatrix}
f_{1}^{(i)}(\z)\\
\vdots\\
f_{N}^{(i)}(\z)
\end{pmatrix}
=
\bb^o\cdot
\begin{pmatrix}
c_1\\
\vdots\\
c_{k+1}
\end{pmatrix},
\eeq

The vector $\bb^o$ may be represented as a linear combination of vectors $\bb_{\ell}$, where $0\le\ell\le m.$
Therefore, we get $\bb^o\cdot\bc=0.$

On the other hand, since $\prod_{i=1}^{N}(t-x_i)^{\de}|q(t)$, we have $\bb^o\cdot\bV_{k}^{\ell}=0$
for each $0\le\ell<\de$.  Then

\[
\bb^o\cdot\bV_{k}^{(\de)}=\left(q^{(\de)}(x_1),\dots,q^{(\de)}(x_N)\right),
\]
where $x_j =  \bil{\z}{\bv_j}$.    We have $q^{(\de)}(x_1)=\gamma\neq 0$, by property (\ref{item:prop2}), and $q^{(\de)}(x_i)=0$ for each $2\le i\le N$, because
$\prod_{i=2}^{N}(t-x_i)^{\de+1}|q(t)$.
Therefore, we get

\beq
\left(\gamma, 0, \dots, 0\right)\cdot
\begin{pmatrix}
\rho(\bv_{1}) & 0 & \dots & 0\\
0 & \rho(\bv_{2}) & \dots &  0 \\
\vdots&\vdots&\dots&\vdots \\
0 & 0 & \dots &  \rho(\bv_{N})\\
\end{pmatrix}
\cdot
\begin{pmatrix}
D_{\bv_1}(\z)\\
\vdots\\
D_{\bv_N}(\z)
\end{pmatrix}
=0.
\eeq

Thus $\gamma\cdot \rho(\bv_1)\cdot D_{\bv_1}(\z)=\bb^o\cdot\bc=0,$ where none of the quantities $\gamma,\rho(\bv_1)$ and $D_{\bv_1}(\z)$
is zero, so that we have arrived at a contradiction.
\hfill $\square$

Therefore, we have proved the following result, the analogue of Theorem~\ref{summary1} for the homogeneous polynomial density
case.
\begin{theorem}
\label{summary2}
Given moments (\ref{eq:defmomzrho}) for a
direction $\z\in\R^d$ in general position and where $\rho$ is a unknown homogeneous polynomial of degree $d^0$, all projections $\bil{\bv}{\z}$, $\bv\in\V(P)$,  are the real roots of the univariate polynomial
$p_\z$ defined in (\ref{case2}).
\end{theorem}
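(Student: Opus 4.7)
The plan is to obtain this statement as an immediate corollary of the preceding Hankel-matrix theorem, in direct analogy with how Theorem~\ref{summary1} follows from Theorem~\ref{hsol}. First, from the given moments $\mu_j(\z)$ of the polynomial-density problem, I would form the scaled vector $\bc$ via (\ref{c-vector_m}) and assemble the $m\times m$ Hankel matrix $\bH(c_1,\dots,c_{2m-1})$ with $m\ge(\de+1)N+1$. The preceding theorem then tells us that $\ker(\bH)$ has rank defect $(\de+1)N$ and is spanned by the shifts $\ba_0,\ba_1,\dots,\ba_{m-(\de+1)N-1}$ of the coefficient vector $\ba_0$ of the polynomial $p_\z$ from (\ref{case2}).

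Next, I would extract $p_\z$ from $\K=\ker(\bH)$ by the same minimality trick used in the uniform-density case: identify the unique vector in $\K$ of the form $(a_0,\dots,a_{M-1},1,0,\dots,0)$ with the smallest possible $M$. By the structure of the kernel, any such vector must be proportional to $\ba_0$, forcing $M=(\de+1)N$ and recovering exactly the coefficients of $p_\z(t)$. In algebraic terms, $\K$ corresponds to the principal ideal $(p_\z)\subset\C[t]$ truncated to polynomials of degree $<m$, and $p_\z$ is its unique monic generator of minimal degree; all other kernel vectors are multiples of $p_\z$ by $t^\ell$, which is precisely what the shift structure $\ba_\ell$ of the preceding theorem records.

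Finally, by the definition (\ref{case2}) we have the factorization $p_\z(t)=\prod_{\bv\in\V(P)}(t-\bil{\bv}{\z})^{\de+1}$, so the set of distinct roots of $p_\z$ equals $\{\bil{\bv}{\z}:\bv\in\V(P)\}$. Because $\z$ is in general position, condition~(b) guarantees the projections $\bil{\bv}{\z}$ are pairwise distinct, so all $N$ projections are recovered unambiguously (with each root appearing to multiplicity exactly $\de+1$, which can serve as a consistency check). These roots are manifestly real, since each $\bv\in\V(P)\subset\R^d$ and $\z\in\R^d$.

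The only nontrivial content beyond bookkeeping is the identification of the minimal-degree kernel vector with the coefficients of $p_\z$, and this is precisely what the preceding Hankel-rank theorem supplies; so the main obstacle has in fact already been dispatched. The remaining work is essentially a repackaging of that theorem into the form convenient for the reconstruction algorithm, together with the observation that distinct roots of $p_\z$ are in bijection with $\V(P)$ under the map $\bv\mapsto\bil{\bv}{\z}$.
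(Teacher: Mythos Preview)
Your proposal is correct and matches the paper's approach exactly: the paper treats Theorem~\ref{summary2} as an immediate corollary of the preceding Hankel-rank theorem (writing only ``Therefore, we have proved the following result''), and your argument spells out precisely that deduction. If anything, your write-up is more explicit than the paper's, which simply asserts that the analogue of Theorem~\ref{summary1} follows once the kernel structure of $\bH$ is established.
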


\bigskip
\subsection{Non-homogeneous measure}
We start with the moment formulas for a polytope with variable, but homogeneous density, namely \eqref{diff_equation1}.
Now we let $\de$ be the maximal degree of the monomials of $\rho(x)$.   Then the formula \eqref{diff_equation2}
can be rewritten as follows.

\beq
\label{diff_equation3}
\sum_{s=0}^{\de}\rho_s\left(\nabla\z\right)t^{-s}
\sum_{\bv\in\V(P)}\sum_{j=0}^{\infty}\frac{\bil{\bv}{\z}^j}{j!}(-1)^{d}D_\bv(\z) t^{j-d} =
\sum_{j=0}^{\infty}\frac{\mu_j}{j!}t^j.
\eeq

Following the same reasoning that was used for the homogeneous variable density case, we first collect all the coefficients  of $t^{j-d-\de}$ on both sides of \eqref{diff_equation3}, to get:

\beq
\sum_{s=0}^{\de}\rho_s\left(\nabla\z\right)
\sum_{\bv\in\V(P)}\frac{j(j-1)\ldots(j-\de+s+1)}{j!}\bil{\bv}{\z}^{j-\de+s} \cdot D_\bv(\z)
=
\frac{c_{j+1}}{j!},
\eeq
where $c_{j+1}=(-1)^{d}\frac{j! \cdot \mu_{j-d-\de}}{(j-d-\de)!}$, as in the formula \eqref{c-vector_m}.
Next, we put everything into a matrix form and get

\beq
\sum_{s=0}^{\de}\rho_s(\nabla\z)
\cdot\left[\bV_k^{(\de-s)}(x_1,\dots,x_d)\cdot\bD\right]
=
\begin{pmatrix}
c_1\\
\vdots\\
c_{k+1}
\end{pmatrix}.
\eeq

The latter matrix ODE can be brought into the same form as \eqref{vanderm_m}, with exactly the
same coefficient of  $\bV_{k}^{\de}(x_1,\dots,x_N)$ that appears in \eqref{highest_t}.
Therefore, our method works for general polynomial density measures as well, with precisely the same algorithm.

\section{General convex polytopes}
\label{sec:non_simple}

In the previous discussion we considered only simple polytopes,
because the  \BrLa\ formula takes a particularly nice simple form when $P$ is a simple polytope.
However, it is natural to extend our
approach to non-simple polytopes. Indeed, it is always possible to
triangulate $P$, that is decompose $P$ into a union of non
overlapping simplices, without adding any extra
vertices (See, for example,  \cite[Theorem~3.1]{BR07}).

We now fix one such triangulation of $P$,  and denote it by $\bT(P)$.
We may then rewrite the formula for each moment $\mu_j (\z)$ as follows.

\beq\label{triang}
\mu_j (\z) = \int_P  \langle \x, \z \rangle ^j
d\x = \sum_{\Delta\in\bT(P)}\int_\Delta  \langle \x, \z \rangle ^j
d\x.
\eeq

Triangulating the general convex polytope $P$ into simplices, we reduce the general moment problem to the moment problem for each simplex $\Delta$ of the triangulation.   Although triangulations may be expensive to construct in practice, we only need to consider a theoretical non-vanishing result, given in Lemma \ref{nonvanish} below, for any such triangulation.  Given such a triangulation, we may then apply the formulas (\ref{brion}) and (\ref{brionzero}) to each of the simplices $\Delta$
in the equation above:

\begin{align*}
\bc_j(\z)  &=\sum_{\Delta\in\bT(P)}  \quad \sum_{\bv\in\V(\Delta)} \bil{\bv}{\z}^{j} D_{\bv}(\Delta,\z) \\
&=\sum_{\bv\in\V(P)}\bil{\bv}{\z}^{j}   \sum_{\Delta\in\{\bT(P)|\bv\in\V(\Delta)\}}   D_{\bv}(\Delta,\z),
\end{align*}
where we have interchanged the order of summation in the last equality above.
We now define $\tilde{D}_{\bv}(\z)$ for this fixed triangulation $\bT(P)$ by:
\beq\label{eq:nonsimplenumerator}
\tilde{D}_{\bv}(\z):=\sum_{\Delta\in\{\bT(P)|\bv\in\V(\Delta)\}}
D_{\bv}(\Delta,\z)
\eeq
Then we have
\beq \bc_j(\z)=\sum_{\bv\in\V(P)}\bil{\bv}{\z}^{j}
\tilde{D}_{\bv}(\z). \eeq
This gives us
\beq
\frac{(j+d)!(-1)^d}{j!}\mu_j(\z)=\sum_{\bv\in\V(P)}\bil{\bv}{\z}^{j+d}
\tilde{D}_{\bv}(\z). \eeq
Note, that in Section \ref{sec:project} we never used the
explicit formula for $D_{\bv}(\z)$. The only fact we 
exploited was that $D_{\bv}(\z)\neq 0$ for a general position vector $\z$.
Therefore, we can apply the same approach for non-simple polytopes,
if we are able to prove that $\tilde{D}_{\bv}(\z)\neq 0$ for a general position
vector $\z$.

\begin{lemma}\label{nonvanish}
For any vertex $\bv\in\V(P)$, any fixed triangulation $\bT(P)$ and a general position vector $\z$
we have $\tilde{D}_{\bv}(\z)\neq 0$.
\end{lemma}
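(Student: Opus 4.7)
The plan is to show that the rational function $\tilde{D}_\bv(\z)$ in $\z$ is not identically zero; its zero locus is then a proper algebraic subvariety of $\R^d$, of Lebesgue measure zero, and so is avoided almost surely by a $\z$ chosen in general position from the continuous Gaussian distribution. The natural way to establish non-vanishing is to recognize $\tilde{D}_\bv(\z)$ as (essentially) the Laplace transform of the translated tangent cone $K_\bv - \bv$, which is manifestly strictly positive on an open set of $\z$'s.

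First I would recall the elementary identity underlying the \BrLa\ framework: for a simplicial cone $C$ based at the origin with edge vectors $w_1,\dots,w_d$, and any $\z\in\R^d$ with $\langle w_k,\z\rangle>0$ for all $k$, the change of variables $\x=\sum t_k w_k$ with $t_k\geq 0$ yields
$$
\int_{C} e^{-\langle \z,\x\rangle}\,d\x \;=\; \frac{|\det(w_1,\ldots,w_d)|}{\prod_{k=1}^d \langle w_k,\z\rangle}.
$$
Applied to each $K_\bv(\Delta)-\bv$ with $\Delta\in\bT(P)$ incident to $\bv$, this shows that $D_\bv(\Delta,\z)$ equals $\int_{K_\bv(\Delta)-\bv} e^{-\langle\z,\x\rangle}\,d\x$ on the open set where $\langle w_k(\bv,\Delta),\z\rangle>0$ for every edge vector.

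Next I would use the fact that any triangulation $\bT(P)$ of $P$ introducing no new vertices induces, at each vertex $\bv$, a triangulation of the tangent cone $K_\bv$ into the simplicial subcones $K_\bv(\Delta)$ with $\bv\in\V(\Delta)$: a sufficiently small neighborhood of $\bv$ in $P$ agrees with a neighborhood of $\bv$ in $K_\bv$, and inherits from $\bT(P)$ precisely this decomposition. Because these subcones have pairwise disjoint interiors and union $K_\bv$, and because each dual cone $K_\bv(\Delta)^\vee$ contains $K_\bv^\vee$ (which has nonempty interior since $P$ is bounded and $d$-dimensional, so $K_\bv$ is pointed and full-dimensional), we may sum the preceding identities over $\Delta\ni\bv$ to obtain, for every $\z$ in the interior of $K_\bv^\vee$,
$$
\tilde{D}_\bv(\z) \;=\; \sum_{\Delta\ni\bv} D_\bv(\Delta,\z) \;=\; \int_{K_\bv - \bv} e^{-\langle\z,\x\rangle}\,d\x \;>\; 0.
$$

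Finally, because $\tilde{D}_\bv(\z)$ is a rational function of $\z$ taking strictly positive values on a nonempty Euclidean open subset of $\R^d$, it cannot be the zero rational function. Writing it over a common denominator, its numerator is a nonzero polynomial in $z_1,\ldots,z_d$, whose zero set has Lebesgue measure zero; a $\z$ sampled in general position from the Gaussian therefore satisfies $\tilde{D}_\bv(\z)\neq 0$ with probability one. The only real obstacle is the geometric step of showing that the simplicial cones $K_\bv(\Delta)$ indeed triangulate $K_\bv$; once that is granted the rest is routine, and this geometric fact is elementary given that $\bT(P)$ uses only vertices of $P$.
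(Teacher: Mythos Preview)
Your proposal is correct and follows essentially the same approach as the paper: both recognize $\tilde{D}_\bv(\z)$ as the Laplace transform of the translated tangent cone $K_\bv-\bv$, observe that this integral is strictly positive for $\z$ in the interior of the dual cone $K_\bv^*$, and conclude that the rational function cannot vanish identically. The only difference is that the paper outsources the identification $\tilde{D}_\bv(\z)=\hat{1}_{K_\bv-\bv}(\z)$ to a citation (Barvinok's book), whereas you supply the details yourself by arguing that the triangulation $\bT(P)$ induces a triangulation of $K_\bv$ into the simplicial cones $K_\bv(\Delta)$ and summing the elementary simplicial-cone integrals.
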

\begin{proof}
We begin by noting that $\tilde{D}_{\bv}(\z)$ is a finite linear combination of
 rational functions of $\z$. In fact, according to the
Lemma 8.3 and Chapter 9 of~\cite{MR2455889}, $\tilde{D}_{\bv}(\z)$
is a rational function that is the analytic continuation, in $\z$, of the function

\[
\hat{1}_{K_{\bv}-\bv}(\z)=\int_{K_{\bv}} e^{\bil{\z}{x-\bv}}d\x,
\]
when this integral converges.  We define the dual cone to $K_{\bv} - \bv$ as follows:
$K_\bv^* := \{ \y \in \R^d \mid \langle \y, \x \rangle < 0, \text{ for all } \x \in  K_{\bv} - \bv   \}$.  Indeed, the latter integral converges for
all $\z$ lying in the interior of the dual cone $K_{\bv}^*$. 
Since $K_{\bv}$ is a tangent cone of a convex polytope, the dual cone $K_{\bv}^*$
is non-empty. Clearly $e^{\bil{\z}{x}}$ is positive for all $x\in K_{\bv} - \bv$, if $z \in K_{\bv}^*$.
We obtain the result that $\hat{1}_{K_{\bv}-\bv}(\z)>0$ for all such $z$, and we may therefore conclude
that the analytic continuation
of $\hat{1}_{K_{\bv}-\bv}(\z)$ cannot vanish.
\end{proof}

\section{An exact algorithm}
\label{sec:algorithm}

In Section \ref{sec:project} we have learned how to find the projections
of vertices of $P$ onto a general position axis $\z$. A
short summary of the procedure for such a randomly picked
$\z\in \R^d$ is as follows:

\begin{algorithm}[H]
%\begin{figure}[ht]
\begin{enumerate}

\item  Given $2m-1 \ge 2N+1$ moments $c_1,\dots,c_{2m-1}$ for $\z$, construct \\
\noindent a square Hankel matrix $\bH(c_1,\dots,c_{2m-1}).$
\medskip
\item Find the vector $v=\left(a_0, \ldots, a_{M-1}, 1, 0, \ldots, 0 \right)$ in $\K$ \\
\noindent with the minimal possible $M.$ It turns out that the number of \\
       \noindent vertices $N=M$.
\medskip
\item The set of roots $\{x_i(\z) = \bil{\bv_i}{\z} |\bv_i\in\V(P)\}$ of polynomial
      $p_\z(t) = a_0 + a_1t + \ldots + a_{N-1} t^{N-1} + t^N$
       then equals the set of \\
       \noindent projections of $\V(P)$ onto $\z$.
\end{enumerate}
\caption{Computing projections.}\label{fig:compproj}
%\end{figure}
\end{algorithm}

\iffalse
{\bf DIMA ....,

It remains to explain how to detect that we are in the case where $2m-1\geq 2N+1$
because we do not know a priori $N$! So I guess that the rank of
the Hankel matrix stabilizes to the value $N$ whenever $2m-1\geq 2N-1$ .. right?
So is it enough to detect the same rank ($=N=m-1$)
with two consecutive values of $m$ and $m+1$? This should be mentioned as it is crucial!}
--- See the following remark:
\fi

\begin{rem}
Note that $N$ is an essential part of the input. One cannot rule out existence
of another polytope $P'$ with $|\V(P')|>N$ and the same moments, up to certain degree.
\end{rem}

\begin{rem}
If we work in the context of exact measurements, with rational vertices and rational choices of
$\z$ vectors, then $p_\z$ has only rational roots.  In this rational
context, we may analyze the complexity issues involved by using the
LLL-algorithm due to Lenstra, Lenstra, and Lov\'{a}sz~\cite{MR682664}, because now
the rational roots of $p_\z$ can be found in time which is polynomial in $N$ and in the
bitsize of $\V(P)$.

%Considering the complexity of  our algorithm while not taking into account possible numerical errors,
%we may assume that $\V(P)\subset\Q$, and that the measurements of the moments are exact.
%If we choose a rational vector $\z$, then $p_\z$ has only rational roots,
%and they can be found by factorization, in time that is polynomial in $N$ and in the
%bitsize of $\V(P)$, using the LLL-algorithm due to Lenstra, Lenstra, and Lov\'{a}sz~\cite{MR682664}.
\end{rem}

In this section, we describe below an exact algorithm to compute $\V(P)$  that runs in
polynomial time given the latter assumptions.
When the roots of $p_\z$ are not available exactly, the algorithm still works,
producing approximate results. However, it seems nontrivial to control the
precision of root-finding, as we need to find the roots of $d$ univariate
polynomials. In Section~\ref{sec:univar} we present a different procedure,
where, in contrast, roots of only one polynomial parametrize $\V(P)$, and
which conceivably is more robust against numerical errors.

We use the assumption that $\z$ is in general position (it suffices to require
that $z$ is not perpendicular to the lines $uv$, for $u,v\in\V(P)$) to maintain
bijectivity of projection onto $\z$, as well as to avoid
division by zero in the terms $D_{v_i}(\z)$.  Choosing $\z$ at random from the
Guassian distribution on
$\R^d$, we get a $\z$ in general position with probability $1$.
Further, to reconstruct the locations of $\V(P)$ given the
projections of vertices on a number of axes we match all projections
of the same vertex as follows.\\

%{\bf We should explain what  {\em ``in general position"} means!}\\
% --- done

\begin{itemize}
\item Take $d$ linearly independent vectors $\z_1,\dots,\z_d$, each chosen in general position.
\item For every $2\le i\le d$ match projections of $\V(P)$ onto $\z_i$ with projections
      onto $\z_1$.
      \begin{enumerate}
      \item Pick a general position vector $\z=\alpha\z_1+\beta\z_i$
          in the plane generated by $\z_1$ and $\z_i$.
      \item Compute the coefficients of the polynomial $p_{\z}(t)$
            using extra $2N+1$ moments in direction $\z$.
      \item \label{step:match} For each pair of projections $x_j(\z_1),x_k(\z_i)$ onto
            $\z_1$ and $\z_i$ match them whenever $p_{\z}(\alpha x_j+\beta x_k)=0$,
            for $1\le j,k\le N$.
      \item With probability $1$ all vertices will be matched correctly, that is
            $x_k(\z_i)$ is matched with $x_k(\z_1)$.
      \end{enumerate}
\item For each $1\le k\le N$ reconstruct $v_k\in\V(P)$ from its projections $x_k(\z_i)$ for $1\le i\le d.$
\end{itemize}

Indeed, the degree $N$ polynomial
\[ p_\z(t)=\prod_k \left(t-\alpha x_k(\z_1)-\beta x_k(\z_i)\right)\]
has $N$ distinct roots.
We evaluate it at the $N^2$ values $\alpha x_j(\z_1)+\beta x_\ell(\z_i)$.
With probability 1, by choice of $\alpha$ and $\beta$,
$p_\z$ will only vanish when $x_j(\z_1)$
and $x_\ell(\z_i)$ correspond to the projections of the same vertex. \\
(In fact, this part is easy to de-randomize: fixing $\alpha=1$ and
choosing more that $N^3$ different values of $\beta$ gives one a ``good''
pair $\alpha$, $\beta$.)

\iffalse
% p_z is a univariate polynomial, and talking about it vanishing at points
% in a plane is not good...
Indeed, in the step \ref{step:match} we are considering $N^2$ possible points
in the plane spanned by $x_j(\z_1),x_k(\z_i)$.
We know that $N$ of them are
actual projections of $\V(P)$ onto the plane and the rest are fake. At probability
$1$ direction $\z$ was chosen in general position. Thus, all real vertices in the plane
give us the all distinct roots of polynomial $p_{\z}$. On the other hand, the probability that a
fake vertex projects into the same root as a real vertex is $0$.

Indeed, in the step \ref{step:match} we are considering $N^2$ possible points
%in the plane generated by $x_j(\z_1),x_k(\z_i)$.
on the line spanned by $\z$.
We know that $N$ of them are actual projections of $\V(P)$ onto the latter
%the plane
and the rest are fake. %Clearly, each projection of
%$\V(P)$ in the plane onto $v_{\z}$ is a root of $p_{\z}$, therefore we will match all necessary pairs of coordinates.
On the other hand, the probability that
%any particular fake point in the plane projects into the root of $p_{\z}$ is $0$ for a uniformly selected direction
%$\z$ in the plane.
$p_{\z}$ has repeated roots is 0, by the choice of $\alpha$ and $\beta$.
\fi

Note that in total we have used $(2d-1)(2N+1-d)$ distinct moments,
while the description of vertices of $P$ requires $d\cdot N$ real
numbers. That is, our procedure is quite frugal in terms of the
moment's measurements.

As claimed in Main Theorem, we can still improve
on the latter (albeit the corresponding procedure is not polynomial time
any more). Indeed,  we only have moments for $d+1$ directions
$\z_1$,\dots, $\z_d$, $\z=\sum_j \alpha_j \z_j$ in general position, we can still carry out a
similar procedure, although one would need to compute $\binom{N}{d}$ test
values (for all the possible $d$-fold matchings) of
\[ p_{\z_0}(t)=\prod_k(t-\sum_{j=1}^d \alpha_j x_k(\z_j)). \]

\section{An analysis of our algorithm in the rational case}
\label{sect:rational.implementation}

In Section~\ref{sec:algorithm} we described our algorithm under the global assumption
that each direction $\z$ is chosen at random from the continuous domain $\R^d$, thus
getting a general position vector $\z$ with probability $1$. However, in any practical implementation,
all the coordinates of $\z$ have to be rational numbers with bounded denominators
and numerators. In this case the probability that $\z$ does not lie in general position
will be strictly smaller than $1$. In what follows we describe a way to pick our $\z$-directions
and argue that the probability for choosing a ``bad set'' of $\z$-directions
(which are not in general position) is indeed small.

We will always pick our $\z$ vectors to be
rational vectors, with denominator equal to $r$, and lying in the
unit cube $[0, 1]^d$.
If we knew the vertex description of a simple polytope $P$, we would
only need to make sure that $\z$ lies in the complement of the
finite union of hyperplanes that are orthogonal to 
all lines between any two vertices of $P$.
We call such a rational $\z$ a {\em generic} vector.
The probability of picking such a generic $\z$ tends to $1$ as $r
\to \infty$.

We now extend the definition of a generic vector $\z$ to a
non-simple polytope $P$.  In this case, in addition to our
previous restriction that $\z$ is not orthogonal to any line between
vertices of $P$, in particular to the edges of $P$, it might occur
that $\z$ is a zero of the rational function $\tilde{D}_{\bv}(\z)$, defined by 
 \eqref{eq:nonsimplenumerator} in Section \ref{sec:non_simple},
 and we need to avoid such a choice of $\z$.  Hence we define a
 generic vector in the general case of non-simple polytopes to be a vector that
 is simultaneously not orthogonal to any line between vertices of $P$, and also not
 a zero of any rational function $\tilde{D}_{\bv}(\z)$.
In particular, we shall avoid zeros and poles of the complex
function $\tilde{D}_{\bv}(\z)$ in $\z$.

In what follows, we refer to the algorithm of Section~\ref{sec:algorithm}.
By the Schwartz-Zippel Lemma \cite{MR594695,MR575692,DMLip78}, we have an upper bound for the
probability that the numerator and denominator of the
multivariable rational function $\tilde{D}_{\bv}(\z)$ vanishes
for a random rational $\z \in [0, 1]^d$, where $\z$ has denominator $r$.
In fact, by our construction, we have $r^d$ such rational vectors $\z$, and the
Schwartz-Zippel Lemma tells us the following:
for sufficiently large prime $r$ $\pr[\z \text{ is a zero of }
\tilde{D}_{\bv}(\z)] \leq \frac{N}{r}$ and similarly $\pr[\z \text{ is a pole of }
\tilde{D}_{\bv}(\z)] \leq \frac{N}{r}$. Indeed, both the numerator and the denominator of
$\tilde{D}_{\bv}(\z)$ are homogeneous polynomials in $d$ variables $z_1\dots,z_d$ of degree at most $N$
with integer coefficients; none of these polynomials vanish when taken over the finite
field $\F_{r}$, for all sufficiently large $r$.

We remark that our algorithm picks either arbitrary generic vectors
(we pick them uniformly at random from the rational unit cube), or takes an integer
linear combination of two independent random vectors. In the former case
by taking $r$ of order $2^{poly(N,d)}$ one can make the above probabilities for all $\tilde{D}_{\bv}(\z)$
to be negligibly small. In the latter case, we need to be more careful, as the sum of two random vectors
uniformly distributed over the rational unit cube is no longer a random vector distributed
uniformly over the
unit cube. However, we may now consider the vector $\alpha \z_1+\beta \z_i$,
as well as the numerator and denominator of $\tilde{D}_{\bv}(\z)$,
over the finite field $\F_r$. We note that, once we fix $0<\alpha<r$ and $0<\beta<r$, the linear
combination of two independent, uniformly distributed vectors, namely $\alpha \z_1+\beta \z_i$, is again
uniformly distributed over $\F_r^d$.

Therefore, we may assume that each particular direction $\z$
that appeared in the algorithm~\ref{fig:compproj} is generic with a very high probability.
On the other hand, a generic vector $\z=\alpha \z_1+\beta \z_i$ in the plane spanned by $\z_1,\z_i$,
matches the set of projections onto $\z_1$ and the set of projections onto $\z_i$ uniquely at
very high probability.
Indeed, given the projection onto $\z_1$ and $\z_i$ there are $N^2$ possible projections of
$\V(P)$ onto the plane spanned by $\z_1$ and $\z_i$ and at most $N^4$ different lines between these points.
In other words, there are altogether at most $N^4$ directions that do not help us match projections
onto $\z_1$ and $\z_i$. In the algorithm we pick one of the $r$ distinct
directions for $\z=\alpha \z_1+\beta \z_i$ for any fixed $\alpha$. Thus the chance
that our algorithm did make a mistake in a particular step is negligibly small.

\section{Univariate representations for $\V(P)$}
\label{sec:univar}
%In exact arithmetic, Theorem~\ref{hsol} gives an efficient procedure
%to reconstruct the projections $\bv\mapsto \bil{\bv}{\z}$ for sufficiently
%generic $\z$. In particular, for almost all polytopes we can
%reconstruct sets of coordinates of $\bv\in \V(P)$. This is however not sufficient
%for reconstructing the $\V(P)$.
In this section, we present an alternative procedure, that
is conceivably more robust than the algorithm in Section \ref{sec:algorithm}, where
given a finite collection of projections of the vertices, we presented an exact procedure to reconstruct them.
That is, we were given some data described in Algorithm~\ref{fig:compproj},
assuming that $\V(P)\subset \Q$ and the measurements are exact.
When at least one of the latter assumptions does not hold, the polynomial
$p_\z$, whose roots are projections of $\V(P)$, may not have rational roots.    Even
its coefficients might be known only approximately.  Thus it might be hard to control numerical
errors.

We construct {\em univariate representations} (see e.g.
\cite{BPRbook}) of $\bv\in \V(P)$. The latter are typically used to
compute solutions of systems of multivariate polynomial equations---here
this appears to be the first use of these representations for purposes other than
solving systems of polynomial equations.
That is, we will express the coordinates of $\bv\in \V(P)$ as univariate
rational functions of $\theta$, where $\theta$ is a root of $p_\ba(t)$ in
\eqref{eq:pa}.

We introduce bivariate polynomials $f_{\ba\bb}\in\R[s,t]$ defined by:
\begin{equation}\label{eq:bivar}
(s,t)\mapsto f_{\ba\bb}(s,t)=\prod_{\bv\in \V(P)} (t-\bil{\bv}{\ba+s\bb}),\qquad \ba,\bb\in\R^d.
\end{equation}

Upon transitioning to rational vectors $\ba$ and $\bb$, generic in the sense of
Section~\ref{sect:rational.implementation}, and with $\ba\neq \bb$, we can compute the coefficients
of $f_{\ba\bb}(s,t)$ by interpolating, with respect to $s$, the coefficients
of the polynomials $f_{\ba\bb}(s,t)=p_{\ba+s\bb}(t)$, with $s=0,1,\dots,N$,
and $p_{\ba+s\bb}$ in \eqref{eq:pa} computed using Theorem~\ref{hsol}.
Define
\begin{equation}\label{eq:gab}
g_{\ba\bb}(t):=\frac{\partial f_{\ba\bb}(s,t)}{\partial s}\mid_{s=0}.
\end{equation}
Then
$$g_{\ba\bb}(t)=-\sum_{\bv\in \V(P)}\bil{\bv}{\bb}\prod_{\bv\neq \bu\in \V(P)}(t-\bil{\bu}{\ba}).$$
In particular for $\w\in \V(P)$ one obtains
\begin{eqnarray*}
g_{\ba\bb}(\bil{\w}{\ba})&=&
-\sum_{\bv\in \V(P)}\bil{\bv}{\bb}\prod_{\bv\neq \bu\in \V(P)}\bil{\w-\bu}{\ba}\\
&=&-\bil{\w}{\bb}\prod_{\w\neq \bu\in \V(P)}\bil{\w-\bu}{\ba}.
\end{eqnarray*}
On the other hand, for $p_\ba$ in \eqref{eq:pa}, its derivative $p'_\ba$ reads
$$p'_\ba(t)= \sum_{\bv\in \V(P)}\prod_{\bv\neq \bu\in \V(P)}(t-\bil{\bu}{\ba})$$
and thus
$$p'_\ba(\bil{\w}{\ba})= \sum_{\bv\in \V(P)}\prod_{\bv\neq\bu\in \V(P)}\bil{\w-\bu}{\ba}=
\prod_{\w\neq \bu\in \V(P)}\bil{\w-\bu}{\ba}.$$
Hence
$$\bil{\w}{\bb}=
\frac{g_{\ba\bb}(\bil{\w}{\ba})}{p'_\ba(\bil{\w}{\ba})}=
\frac{g_{\ba\bb}(\theta)}{p'_\ba(\theta)},\quad\text{for some $\theta$ s.t. }
p_\ba(\theta)=0.$$
In particular, assuming that a set of basis vectors $\be_1,\dots,\be_d$ of $\R^d$
are generic, we obtain
\begin{theorem}\label{th:univar}
The set of vertices of $P$ is given by
\begin{equation}\label{eq:univarrep}
\V(P)=\left\{\left( \frac{g_{\ba\be_1}(\theta)}{p'_\ba(\theta)},\dots,
\frac{g_{\ba\be_d}(\theta)}{p'_\ba(\theta)}\right)\mid
\quad\text{ for each $\theta$ s.t. } p_\ba(\theta)=0\right\},
\end{equation}
provided that $\ba,\be_1,\dots,\be_d\in\R^d$ are `sufficiently general' w.r.t. $P$ -- that is,
 the polynomial $p_\ba(t)$ from \eqref{eq:pa}
and the polynomials $g_{\ba\be_j}(t)$ from \eqref{eq:gab} have no multiple
root. \qed
\end{theorem}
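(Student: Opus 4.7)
The plan is to assemble the statement directly from the identity already derived in the paragraphs preceding the theorem. Concretely, the computation just above \eqref{eq:univarrep} shows, for each vertex $\w\in\V(P)$ and any auxiliary vector $\bb\in\R^d$, the equality
\[
\bil{\w}{\bb}=\frac{g_{\ba\bb}(\bil{\w}{\ba})}{p'_\ba(\bil{\w}{\ba})},
\]
provided the denominator does not vanish. The main work is therefore not the algebraic identity itself, but to verify that the hypothesis that $\ba,\be_1,\dots,\be_d$ are sufficiently general makes all the implicit conditions true simultaneously, and to pass from the scalar identity to the vector-valued parametrization of $\V(P)$.

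First I would fix a sufficiently generic $\ba$ in the sense of Section~\ref{sect:rational.implementation}, which guarantees that the projections $\{\bil{\w}{\ba}:\w\in\V(P)\}$ are pairwise distinct; this makes $\w\mapsto \bil{\w}{\ba}$ a bijection from $\V(P)$ to the zero set of $p_\ba$, and in particular $p_\ba$ has only simple roots. Simplicity of roots gives $p'_\ba(\bil{\w}{\ba})\neq 0$ for every $\w\in\V(P)$, so the displayed quotient is well defined. Next, specializing the identity to $\bb=\be_j$ for $j=1,\dots,d$, one recovers $\bil{\w}{\be_j}$ as $g_{\ba\be_j}(\theta)/p'_\ba(\theta)$ where $\theta=\bil{\w}{\ba}$. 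Since $\be_1,\dots,\be_d$ form a basis of $\R^d$, the $d$-tuple of inner products determines $\w$ uniquely, so the right-hand side of \eqref{eq:univarrep} indeed produces the coordinates of $\w$ in that basis (after an obvious change of basis if $\{\be_j\}$ is not the standard one).

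To finish, I would verify the containment in the other direction. Every root $\theta$ of $p_\ba$ equals $\bil{\w}{\ba}$ for some vertex $\w\in\V(P)$ by Theorem~\ref{summary1} applied to direction $\ba$, so the tuple on the right-hand side of \eqref{eq:univarrep} recovers that very $\w$. Thus the map from roots of $p_\ba$ to the right-hand side of \eqref{eq:univarrep} is a bijection onto $\V(P)$, proving equality of sets. The hypothesis that the $g_{\ba\be_j}(t)$ have no multiple roots is used only to ensure that no two distinct roots of $p_\ba$ collapse to the same coordinate value spuriously under the rational function $g_{\ba\be_j}/p'_\ba$; together with the genericity of $\ba$, this ensures the parametrization is well-posed.

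The main obstacle, such as it is, is the bookkeeping for genericity: one must check that the Zariski-open conditions ``$p_\ba$ has simple roots,'' ``$\ba$ separates vertices,'' and ``$g_{\ba\be_j}(t)$ has simple roots for each $j$'' can be met simultaneously. Each is the complement of a proper algebraic subvariety in the parameter space of $(\ba,\be_1,\dots,\be_d)$, so their intersection is nonempty (in fact dense), and a random choice works with probability one, consistently with the rational sampling analysis of Section~\ref{sect:rational.implementation}.
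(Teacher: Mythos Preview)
Your proposal is correct and follows essentially the same approach as the paper: the paper's proof is precisely the computation in the paragraphs immediately preceding the theorem (note the \qed\ at the end of the statement), establishing $\bil{\w}{\bb}=g_{\ba\bb}(\bil{\w}{\ba})/p'_\ba(\bil{\w}{\ba})$ and then specializing $\bb$ to the basis vectors $\be_j$. If anything, you are slightly more explicit than the paper about the bijection between roots of $p_\ba$ and vertices, and about why the genericity hypotheses are simultaneously satisfiable.
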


We remark that the assumption of being ``sufficiently general''  in Theorem~\ref{th:univar}
is equivalent to the fact that each of the vectors $\ba,\be_1,\dots,\be_d$ does not lie in the
discriminant varieties of the polynomial $p_\ba(t)$ and the set of polynomials $g_{\ba\be_j}(t)$.

Assuming that computation is done with arbitrary precision, the vertices of $P$
can be obtained by evaluating the vectors of rational functions in $\theta$ at the roots of $p_\ba$,
as in \eqref{eq:univarrep}. Therefore, we have transformed the delicate computations of the roots of the
polynomials
$p_\z$ for all projections onto a number of axis vectors $\z$,
into just one calculation given by \eqref{eq:univarrep}.

We note that here we need to use
$O(dN^2)$ moments, which is typically much less frugal than the method of
Section~\ref{sec:algorithm}, which only uses $O(dN)$ of them.

\begin{rem}
A similar computation of the univariate representation
can be carried out even without the genericity assumptions, when the corresponding univariate polynomials have
multiple roots. See \cite{GrPa04} for details.
\end{rem}

\section{An application to physics}
\label{sect:potential}
Here we discuss an application of our results to a classical problem
of mathematical physics---reconstruction of an object from the potential
of a field that it creates. For concreteness, we limit ourselves to the 3-dimensional
potential of the gravitational field.  The potential function $u(x):=u(x_1,x_2,x_3)$
of the gravitational field $F(x)$ is defined by
\[
F(x)=\nabla u(x).
\]
In turn, for a body $T\subset\R^3$ with density $\rho(x)$
the potential is given by
\[
 u(x)=\int_T \frac{\rho(t)}{\|x-t\|}dt,\quad\text{for any $x\not\in T$}.
\]
A typical physics problem is to reconstruct $T$ and $\rho$ from $u$, i.e.
from the measurements of $u$.
That is, we can assume that
$\|x-t\|^{-1}=\sum_{a} f_a(x)t^a$ is an expansion in a Taylor series
w.r.t. $t=(t_1,t_2,t_3)$, and the $f_a(x)$ depend upon $x$ only.
%For instance, as $\|x-t\|^{-1}$ is harmonic as a function of $x$,
%it can be uniquely written as a sum of harmonic polynomials. ???
Then the expansion
\[
 u(x)=\sum_{a} f_a(x)\int_T t^a\rho(t)dt,\quad\text{for any $x\not\in T$}
\]
encodes information of the  moments $\int_T t^a \rho(t)dt$ of the measure
$\rho(t)$ supported on $T$.  Thus reconstructing $T$ and $\rho$ from $u$ is an
inverse moment problem. For instance, when $\rho$ is a polynomial and $T$ is a
polytope, the approach described in this paper can be applied to this inverse
potential problem and will provide an exact reconstruction.

\iffalse
\begin{rem}
It would be interesting to see how far one can go with reconstructing $\rho(x)$,
assuming, say,  a polynomial density function $\rho(x)$.
\end{rem}
\fi

 \bigskip
\paragraph{\textbf{Acknowledgments.} We thank the referee
for very useful suggestions, which indeed improved the text.}

\nocite{MR2199920,MR2589247}
\bibliographystyle{alpha}
\bibliography{poly,master}

\newcommand{\etalchar}[1]{$^{#1}$}
\def\cprime{$'$} \def\cprime{$'$} \def\cprime{$'$}
\begin{thebibliography}{BBDL{\etalchar{+}}11}

\bibitem[Bar91]{MR1118837}
A.~I. Barvinok.
\newblock Calculation of exponential integrals.
\newblock {\em Zap. Nauchn. Sem. Leningrad. Otdel. Mat. Inst. Steklov. (LOMI)},
  192(Teor. Slozhn. Vychisl. 5):149--162, 175--176, 1991.

\bibitem[Bar92]{MR1173086}
A.~I. Barvinok.
\newblock Exponential integrals and sums over convex polyhedra.
\newblock {\em Funktsional. Anal. i Prilozhen.}, 26(2):64--66, 1992.

\bibitem[Bar08]{MR2455889}
Alexander Barvinok.
\newblock {\em Integer points in polyhedra}.
\newblock Zurich Lectures in Advanced Mathematics. European Mathematical
  Society (EMS), Z\"urich, 2008.

\bibitem[BBDL{\etalchar{+}}11]{MR2728981}
Velleda Baldoni, Nicole Berline, Jesus~A. De~Loera, Matthias K{\"o}ppe, and
  Michele Vergne.
\newblock How to integrate a polynomial over a simplex.
\newblock {\em Math. Comp.}, 80(273):297--325, 2011.

\bibitem[BLV97]{MR1729647}
Daniel~L. Boley, Franklin~T. Luk, and David Vandevoorde.
\newblock Vandermonde factorization of a {H}ankel matrix.
\newblock In {\em Scientific computing ({H}ong {K}ong, 1997)}, pages 27--39.
  Springer, Singapore, 1997.

\bibitem[BPR03]{BPRbook}
Saugata Basu, Richard Pollack, and Marie-Francoise Roy.
\newblock {\em Algorithms in real algebraic geometry}, volume~10 of {\em
  Algorithms and Computation in Mathematics}.
\newblock Springer-Verlag, Berlin, 2003.
\newblock Revised version of the first edition online at
  \url{http://perso.univ-rennes1.fr/marie-francoise.roy/}.

\bibitem[BR07]{BR07}
Matthias Beck and Sinai Robins.
\newblock {\em Computing the continuous discretely: integer-point enumeration
  in polyhedra}.
\newblock Undergraduate Texts in Mathematics. Springer, New York, 2007.

\bibitem[Bri88]{MR982338}
Michel Brion.
\newblock Points entiers dans les poly\`edres convexes.
\newblock {\em Ann. Sci. \'Ecole Norm. Sup. (4)}, 21(4):653--663, 1988.

\bibitem[CGMV05]{MR2199920}
Annie Cuyt, Gene Golub, Peyman Milanfar, and Brigitte Verdonk.
\newblock Multidimensional integral inversion, with applications in shape
  reconstruction.
\newblock {\em SIAM J. Sci. Comput.}, 27(3):1058--1070 (electronic), 2005.

\bibitem[Dav64]{MR0167602}
Philip~J. Davis.
\newblock Triangle formulas in the complex plane.
\newblock {\em Math. Comp.}, 18:569--577, 1964.

\bibitem[DL78]{DMLip78}
R.~A. Demillo and R.~T. Lipton.
\newblock A probabilistic remark on algebraic program testing.
\newblock {\em Information Processing Letters}, 7(4):192--195, 1978.

\bibitem[EMG04]{EMG04}
Michael Elad, Peyman Milanfar, and Gene~H. Golub.
\newblock Shape from moments---an estimation theory perspective.
\newblock {\em IEEE Trans. Signal Process.}, 52(7):1814--1829, 2004.

\bibitem[GMV00]{MR1740393}
Gene~H. Golub, Peyman Milanfar, and James Varah.
\newblock A stable numerical method for inverting shape from moments.
\newblock {\em SIAM J. Sci. Comput.}, 21(4):1222--1243 (electronic), 1999/00.

\bibitem[GP05]{GrPa04}
Dima Grigoriev and Dmitrii~V. Pasechnik.
\newblock Polynomial-time computing over quadratic maps. {I}. {S}ampling in
  real algebraic sets.
\newblock {\em Comput. Complexity}, 14(1):20--52, 2005.

\bibitem[Las10]{MR2589247}
Jean~Bernard Lasserre.
\newblock {\em Moments, positive polynomials and their applications}, volume~1
  of {\em Imperial College Press Optimization Series}.
\newblock Imperial College Press, London, 2010.

\bibitem[Law91]{MR1079024}
Jim Lawrence.
\newblock Polytope volume computation.
\newblock {\em Math. Comp.}, 57(195):259--271, 1991.

\bibitem[LLL82]{MR682664}
A.~K. Lenstra, H.~W. Lenstra, Jr., and L.~Lov{\'a}sz.
\newblock Factoring polynomials with rational coefficients.
\newblock {\em Math. Ann.}, 261(4):515--534, 1982.

\bibitem[PK92]{MR1190788}
A.~V. Pukhlikov and A.~G. Khovanski{\u\i}.
\newblock The {R}iemann-{R}och theorem for integrals and sums of
  quasipolynomials on virtual polytopes.
\newblock {\em Algebra i Analiz}, 4(4):188--216, 1992.

\bibitem[Sch80]{MR594695}
J.~T. Schwartz.
\newblock Fast probabilistic algorithms for verification of polynomial
  identities.
\newblock {\em J. Assoc. Comput. Mach.}, 27(4):701--717, 1980.

\bibitem[Sta97]{MR1442260}
Richard~P. Stanley.
\newblock {\em Enumerative combinatorics. {V}ol. 1}, volume~49 of {\em
  Cambridge Studies in Advanced Mathematics}.
\newblock Cambridge University Press, Cambridge, 1997.
\newblock With a foreword by Gian-Carlo Rota, Corrected reprint of the 1986
  original.

\bibitem[Zip79]{MR575692}
Richard Zippel.
\newblock Probabilistic algorithms for sparse polynomials.
\newblock In {\em Symbolic and algebraic computation ({EUROSAM} '79,
  {I}nternat. {S}ympos., {M}arseille, 1979)}, volume~72 of {\em Lecture Notes
  in Comput. Sci.}, pages 216--226. Springer, Berlin, 1979.

\end{thebibliography}

\newpage
\appendix
\section{Proof of the \BrLa\ identities for moments of polytopes}
%\section{Appendix  - proof of the \BrLa\ identities for moments of polytopes}
\label{LawrenceProof}

Here we recall the proof of the moment formulas of \BrLa, as well as some useful facts that arise in combinatorial geometry,
and which we have used in the present paper,
but which may not be well-known yet to the mathematical community at large.  Although some of the proofs here may not be completely self-contained, they give the reader the proper background for understanding where the moment formulas come from, and the tools that are used for handling them.   For more detailed proofs of some of these results, the reader may consult the book \cite{MR2455889}, as well as the book \cite[Corollary~11.9]{BR07}.
We begin with a very useful  geometric identity, which has an
inclusion-exclusion structure, due to Brianchon and Gram.  

We let $1_P(x)$ denote the indicator function of any convex polytope $P$.  For any
$d$-dimensional convex polytope $P$, we have the following Brianchon-Gram
identity:
\beq\label{eq:BriaGra}
1_P (\x) = \sum_{F \subset P} (-1)^{\dim(F)} 1_{K_F}(\x),
\eeq
valid for all $\x \in \R^d$.  Here we are using the tangent cone $K_F$ at each face $F$ of $P$.

%------------------------------------------------------------------------------------

\begin{lemma}\label{lem:hatcone_charfun}
\[
\hat{1}_P (\x) = \sum_{v \in \V(P)} \hat{1}_{K_\bv}(\x),
\]
\end{lemma}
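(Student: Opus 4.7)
The plan is to deduce Lemma \ref{lem:hatcone_charfun} from the Brianchon-Gram identity \eqref{eq:BriaGra} by taking Fourier-Laplace transforms of both sides, and then exploiting the fact that the transforms associated to tangent cones at higher-dimensional faces vanish (in the sense of analytic continuation).

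First I would take the Fourier-Laplace transform of each term in \eqref{eq:BriaGra}. By linearity this yields
\[
\hat{1}_P(\x) \;=\; \sum_{F \subset P} (-1)^{\dim(F)} \, \hat{1}_{K_F}(\x),
\]
where each $\hat{1}_{K_F}$ is, a priori, defined only for $\x$ in the interior of the dual cone $K_F^*$, but (as invoked in the proof of Lemma \ref{nonvanish} and justified in the references given there, e.g. \cite{MR2455889}) extends to a meromorphic rational function on $\C^d$. The Brianchon-Gram identity is a pointwise identity of compactly supported (for $P$) and non-compactly supported (for the $K_F$'s with $\dim F\ge 1$) functions, so to make the Fourier-Laplace transform well-defined one should first multiply by a suitable decaying factor $e^{\bil{\z}{\x}}$ with $\z$ chosen so that all integrals converge simultaneously — such a $\z$ exists because the intersection of the interiors of the dual cones $K_F^*$ for all faces $F$ is non-empty (it contains, for instance, any $\z$ in the interior of a common refinement of the normal fan). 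One then analytically continues.

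The key step, which will do the real work, is the vanishing claim: for every face $F$ of $P$ with $\dim(F) \ge 1$, one has $\hat{1}_{K_F} \equiv 0$ as a rational function. The reason is that the tangent cone $K_F$ at a positive-dimensional face $F$ contains the entire affine hull of $F$ (translated to the vertex), hence contains a nontrivial linear subspace $L\subset \R^d$. For $\x$ in the open set where the integral defining $\hat{1}_{K_F}$ converges absolutely, one can write $K_F = L \oplus C$ for some pointed cone $C$, and then
\[
\hat{1}_{K_F}(\x) \;=\; \Bigl(\int_L e^{\bil{\x}{\y}}\,d\y\Bigr)\cdot \hat{1}_C(\x),
\]
but the integral over $L$ is already $0$ (or, more precisely, forces the convergence domain to be empty unless we interpret it as the analytic continuation of a factor that vanishes identically). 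Equivalently, one can give a direct proof: $L\subset K_F$ implies $1_{K_F}(\x+\y)=1_{K_F}(\x)$ for all $\y\in L$, and for $\x$ in the convergence domain a change of variable forces $e^{\bil{\z}{\y}}\,\hat{1}_{K_F}(\z) = \hat{1}_{K_F}(\z)$ for all $\y\in L$, which is only compatible with analytic continuation if $\hat{1}_{K_F}\equiv 0$.

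Once this vanishing is established, all terms with $\dim(F) \ge 1$ drop out of the transformed Brianchon-Gram identity, leaving precisely
\[
\hat{1}_P(\x) \;=\; \sum_{\bv \in \V(P)} \hat{1}_{K_\bv}(\x),
\]
as desired. The main obstacle is the careful justification of the vanishing of $\hat{1}_{K_F}$ for higher-dimensional $F$ via analytic continuation, since the original Brianchon-Gram identity involves non-integrable indicator functions and the Fourier-Laplace transform is only literally defined on the interiors of the respective dual cones; everything else is a bookkeeping application of linearity and the classification of faces by dimension.
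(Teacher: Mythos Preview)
Your proposal is correct and follows essentially the same approach as the paper: both take the Fourier--Laplace transform of the Brianchon--Gram identity and argue that the contributions from tangent cones at positive-dimensional faces vanish because such cones contain a line. Your write-up is in fact more careful than the paper's, which simply asserts that $\hat{1}_{K_F}=0$ whenever $K_F$ contains a line and defers the rigorous justification (via valuations) to \cite{MR2455889}; your translation-invariance and $L\oplus C$ arguments flesh out exactly that point.
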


\proof
We simply take the Fourier-Laplace transform of both sides of the
Brianchon-Gram identity above, and we recall that it is defined by $ \hat{f}(\z) :=     \int_{\R^d} f(\x)e ^{ \langle \x, \z \rangle} d\x$, valid for all $\z \in \C^d$ for which the integral converges.
By definition, we have
\[
\hat{1}_P(\z) = \int_{P} e ^{ \langle \x, \z \rangle} d\x,
\]
the Fourier-Laplace transform of the indicator function of $P$.  It turns out that we may define the Fourier-Laplace transform
$\hat{1}_{K_F} (x) = 0$, for any tangent cone $K_F$ which contains a line (isomorphic to $\R^1$).
Since all tangent cones $K_F$ contain a line,  except for the vertex tangent cones, we are left only with the
Fourier-Laplace transforms of the vertex tangent cones.   Precisely, we get:
\[
\hat{1}_P (\x) = \sum_{v \in \V(P)} \hat{1}_{K_\bv}(\x). \qedhere
\]

Using the theory of valuations, one can make the proof of the former Lemma more rigorous (see \cite{MR2455889}).  However, for the purposes of this appendix, 
it is not necessary to consider the subtle issues of convergence that arise here. 
%------------------------------------------------------------------------------------

\begin{lemma} \label{lem:hatcone}
Let $K_\bv$ be a vertex tangent cone of a simple polytope $P$.  Then
\[
\hat{1}_{K_\bv}(\z) = (-1)^d  \ \frac{  e ^{ \langle \bv, \z \rangle}  \det K_{\bv}     }{  \prod_{k=1}^d   \langle \w_k(\bv), \z \rangle  },
\]
for all $\z \in \C^d $ such that the denominator does not vanish.
\end{lemma}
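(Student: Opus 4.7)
\medskip
\noindent\textbf{Proof sketch (plan).}
The plan is to reduce the computation to a product of one-dimensional Laplace integrals via a linear change of variables adapted to the edge directions at $\bv$, and then invoke analytic continuation to remove the restriction on $\z$ that makes the integral converge absolutely.

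First, translate so that the apex sits at the origin: write $K_{\bv} = \bv + C_{\bv}$, where $C_{\bv}$ is the cone with apex $0$ spanned non-negatively by the edge vectors $w_1(\bv),\dots,w_d(\bv)$. Because the integrand is an exponential, this translation pulls out the factor $e^{\langle \bv,\z\rangle}$, so it suffices to prove
\[
\hat{1}_{C_{\bv}}(\z)=\int_{C_{\bv}} e^{\langle \y,\z\rangle}\,d\y
=(-1)^d\,\frac{\det K_{\bv}}{\prod_{k=1}^d\langle w_k(\bv),\z\rangle}.
\]

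Second, because $P$ is simple, the $d$ edge vectors $w_1(\bv),\dots,w_d(\bv)$ are linearly independent, so the map
$\Phi:\R_{\ge 0}^{d}\to C_{\bv}$, $(t_1,\dots,t_d)\mapsto \sum_{k=1}^d t_k w_k(\bv)$,
is a bijection with constant Jacobian $\det(w_1(\bv),\dots,w_d(\bv))=\det K_{\bv}$ (after a fixed ordering of the edges). Change variables under this $\Phi$.

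Third, assume $\z$ lies in the open dual cone
\[
C_{\bv}^{\ast}=\{\z\in\C^{d}:\Re\langle w_k(\bv),\z\rangle<0\ \text{for all } k\},
\]
which is nonempty because $C_{\bv}$ is pointed (this is where one needs $\z\in\C^d$, as remarked in the introduction). Under this assumption the integral factorizes into one-dimensional Laplace integrals:
\[
\int_{C_{\bv}} e^{\langle \y,\z\rangle}\,d\y
=\det K_{\bv}\prod_{k=1}^{d}\int_{0}^{\infty}e^{t_k\langle w_k(\bv),\z\rangle}\,dt_k
=\det K_{\bv}\prod_{k=1}^{d}\frac{-1}{\langle w_k(\bv),\z\rangle},
\]
which, after collecting the $d$ minus signs, is exactly the claimed identity.

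Finally, observe that the right-hand side is a rational function of $\z$, holomorphic wherever the denominator does not vanish, while the left-hand side, interpreted appropriately (e.g.\ as a valuation-theoretic Fourier--Laplace transform, cf.\ \cite{MR2455889}), provides an analytic continuation from $C_{\bv}^{\ast}$. Since the two holomorphic functions agree on the open set $C_{\bv}^{\ast}$, they agree everywhere the right-hand side is defined, giving the formula for every admissible $\z\in\C^d$.

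The main obstacle is the third/fourth step: the integral defining $\hat{1}_{K_{\bv}}$ is genuinely divergent for most real $\z$, so one must either work on the tube domain $\R^d+iC_{\bv}^{\ast}$ and appeal to analytic continuation, or use the valuation-theoretic framework (already invoked for Lemma~\ref{lem:hatcone_charfun}) to give the identity a rigorous meaning for all $\z$ outside the hyperplanes $\langle w_k(\bv),\z\rangle=0$. All other steps are routine linear algebra and one-variable calculus.
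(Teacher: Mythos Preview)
Your proposal is correct and follows essentially the same approach as the paper: translate the apex to the origin to extract the factor $e^{\langle \bv,\z\rangle}$, then use the linear change of variables determined by the edge vectors to reduce to the positive orthant, where the integral factors into $d$ one-dimensional Laplace integrals. The paper phrases the change of variables via the Fourier identity $\widehat{(f\circ T)}(\z)=|\det T|\,\hat f(T^{t}\z)$ rather than writing out the Jacobian directly, and it is silent on the convergence issue that you address explicitly via the dual cone and analytic continuation, but the underlying argument is the same.
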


\proof
The main idea here is to use the fact that there is a linear transformation that maps the simple tangent cone $K_\bv$ bijectively onto the positive orthant
 $K_{orth}:= \{ (x_1, \dots, x_d)  \in \R^d  \quad | \quad x_j \geq 0    \}$.   To be explicit, let $K_\bv - \bv := K_0$ be the translated copy of our tangent cone $K_\bv$, so that the vertex of $K_0$ lies at the origin.    Let $M$  be the invertible matrix whose columns are the $d$ linearly independent edge vectors $\w_k(\bv)$ of $K_\bv$.   Then the linear transformation $T: \  K_{orth} \rightarrow K_\bv - \bv$, defined by $T(x) = Mx$, gives us the desired bijection from the positive orthant onto the translated tangent cone $K_\bv - \bv := K_0$.   Now we use the explicit computation for the Fourier-Laplace transform of the positive orthant $K_{orth}$, namely:
 \[
 \hat{1}_{K_{orth}}(\z)=  \prod_{j=1}^d \hat{1}_{\R_{\geq 0}}(z_j) = (-1)^d \prod_{j=1}^d  \left(  \frac{1}{z_j} \right).
 \]

Finally, the standard Fourier identity $\widehat{(f \circ T)}(\z) = |\det T|  \hat f(T^t \z)$, valid for any invertible linear transformation $T$, allows us to finish the computation:
  \begin{align*}
 \hat{1}_{K_{\bv}}(\z) &=    \hat{1}_{K_{0} + \bv}(\z) \\
 &= e^{\langle \bv, \z \rangle}    \hat{1}_{K_{0}}(\z) \\
 &=e^{\langle \bv, \z \rangle}    \hat{1}_{M(K_{orth})}(\z) \\
 & = e^{\langle \bv, \z \rangle}    |\det M| \hat{1}_{K_{orth}}(M^t \z) \\
 &= e^{\langle \bv, \z \rangle}     (-1)^d \det K_{\bv}  \prod_{j=1}^d  \left(  \frac{1}{\langle \w_k(\bv), \z \rangle} \right).
 \end{align*}
\qedhere

%------------------------------------------------------------------------------------

\bigskip

\begin{theorem}\label{th:BrionBarvinoketc}
Let $P$ be a simple convex polytope.   An explicit formula for the Fourier-Laplace transform of $P$ is given by:

\beq\label{eq:simpleexp}
\int_{P} e ^{ \langle \x, \z \rangle} d\x = (-1)^d  \sum_{\bv \in \V(P)}
\frac{     e ^{ \langle \bv, \z \rangle}  \det K_{\bv}     }
{  \prod_{k=1}^d   \langle \w_k(\bv), \z \rangle  },
\eeq
for all $\z$ that are not orthogonal to any edge of $P$.
\end{theorem}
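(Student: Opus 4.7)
The plan is to combine the two lemmas immediately preceding the theorem statement. By definition, the Fourier-Laplace transform of the indicator function $1_P$ is precisely
\[
\hat{1}_P(\z) = \int_P e^{\langle \x, \z \rangle} d\x,
\]
so the left-hand side of \eqref{eq:simpleexp} is nothing other than $\hat{1}_P(\z)$. Thus the theorem is equivalent to an explicit evaluation of $\hat{1}_P(\z)$ as a sum of vertex contributions.

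First I would invoke Lemma~\ref{lem:hatcone_charfun}, which decomposes $\hat{1}_P(\z)$ as a sum over vertices of the Fourier-Laplace transforms of the vertex tangent cones:
\[
\hat{1}_P(\z) = \sum_{\bv \in \V(P)} \hat{1}_{K_\bv}(\z).
\]
Since $P$ is simple by hypothesis, every vertex tangent cone $K_\bv$ is itself a simple cone, so Lemma~\ref{lem:hatcone} applies to each summand and yields
\[
\hat{1}_{K_\bv}(\z) = (-1)^d \frac{e^{\langle \bv, \z \rangle} \det K_\bv}{\prod_{k=1}^d \langle w_k(\bv), \z \rangle}.
\]
Substituting term by term gives exactly \eqref{eq:simpleexp}.

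The remaining thing to check is the domain of validity: the formula in Lemma~\ref{lem:hatcone} requires that none of the denominators $\langle w_k(\bv), \z \rangle$ vanish, which is precisely the assumption that $\z$ is not orthogonal to any edge of $P$ (since the edge directions at $\bv$ are exactly the $w_k(\bv)$ for $k=1,\dots,d$, and the edges of $P$ are in bijection with these edge directions across all vertices). The main subtlety — and really the only delicate point — is that the Fourier-Laplace integrals $\hat{1}_{K_\bv}(\z)$ do not all converge simultaneously on a common domain in $\C^d$; convergence for a given vertex cone $K_\bv$ requires $\z$ to lie in the interior of the dual cone $K_\bv^*$. The standard remedy, which I would invoke by reference to the valuation-theoretic framework of \cite{MR2455889} as the appendix already does for Lemma~\ref{lem:hatcone_charfun}, is that each $\hat{1}_{K_\bv}(\z)$ admits a unique meromorphic extension to all of $\C^d$ given by the rational expression in Lemma~\ref{lem:hatcone}, and the identity in Lemma~\ref{lem:hatcone_charfun} continues by analytic continuation to hold between these meromorphic extensions wherever they are jointly regular. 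This justifies the pointwise identity \eqref{eq:simpleexp} on the claimed domain, completing the proof.
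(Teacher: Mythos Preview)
Your proposal is correct and follows essentially the same approach as the paper: combine Lemma~\ref{lem:hatcone_charfun} (decomposition of $\hat{1}_P$ as a sum over vertex tangent cones) with Lemma~\ref{lem:hatcone} (explicit evaluation of each $\hat{1}_{K_\bv}$ for a simple cone). Your additional discussion of the convergence subtlety and analytic continuation is more explicit than the paper's own proof, but the paper already flags this point after Lemma~\ref{lem:hatcone_charfun}, so you are elaborating rather than diverging.
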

\proof
From Lemma \ref{lem:hatcone_charfun}, we know that the Fourier-Laplace transform of $P$ is given
by the sum of the Fourier-Laplace transforms of the vertex tangent cones $K_{\bv}$, over all vertices $\bv$ of $P$.
Using Lemma \ref{lem:hatcone} to rewrite the Fourier-Laplace transform of each vertex tangent cone explicitly, we are done.
\qed

%------------------------------------------------------------------------------------

\begin{theorem}\label{th:BrionBarvinokMeasure}
For any convex polytope $P$ and any polynomial $\rho \in\R[\x]$, there exist
rational functions $q_{\bv}(\z)$ such that

\beq\label{eq:BrionBarvinoketc}
\int_{P} e ^{ \langle \x, \z \rangle} \rho(\x)d\x = \sum_{\bv \in \V(P)}
  e^{ \langle \bv, \z \rangle} q_{\bv}(\z),
\eeq
for all $\z$  such that the function  $e^{ \langle \bv, \z \rangle} q_{\bv}(\z)$ is analytic at $\z$.
\end{theorem}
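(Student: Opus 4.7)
The plan is to build on Theorem~\ref{th:BrionBarvinoketc} in two stages: first remove the simplicity assumption via a triangulation of the vertex tangent cones, then incorporate the polynomial density $\rho$ via a constant-coefficient differential operator in the variable $\z$.

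For the first stage, I would keep the identity $\hat{1}_P(\z)=\sum_{\bv\in\V(P)}\hat{1}_{K_\bv}(\z)$ of Lemma~\ref{lem:hatcone_charfun}, whose derivation from Brianchon--Gram never uses simplicity of $P$. When $K_\bv$ is not simple, triangulate it (without adding new rays at the apex) into simple subcones $K_{\bv,1},\dots,K_{\bv,s_\bv}$ sharing the vertex $\bv$. Since the pairwise intersections of the $K_{\bv,i}$ are lower-dimensional and contained in common hyperplanes, they contribute nothing to the Fourier--Laplace transform, so $\hat{1}_{K_\bv}(\z)=\sum_i \hat{1}_{K_{\bv,i}}(\z)$ as rational functions (in the sense of analytic continuation from the common open set of convergence). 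Each simple term is computed by Lemma~\ref{lem:hatcone}, giving $\hat{1}_{K_{\bv,i}}(\z)=e^{\langle\bv,\z\rangle}r_{\bv,i}(\z)$ for some rational $r_{\bv,i}$. Summing, $\hat{1}_{K_\bv}(\z)=e^{\langle\bv,\z\rangle}\tilde q_\bv(\z)$ with $\tilde q_\bv$ rational, which establishes the $\rho\equiv 1$ case of \eqref{eq:BrionBarvinoketc} for arbitrary convex~$P$.

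For the second stage, I would exploit the elementary identity $\partial_{z_j}e^{\langle\x,\z\rangle}=x_j\,e^{\langle\x,\z\rangle}$, which iterates to $\rho(\x)e^{\langle\x,\z\rangle}=\rho(\partial/\partial\z)\,e^{\langle\x,\z\rangle}$ for any polynomial $\rho\in\R[\x]$. Because $P$ is bounded and the integrand is jointly smooth in $(\x,\z)$, differentiation under the integral sign is valid, so
\[
\int_{P} e^{\langle \x,\z\rangle}\rho(\x)\,d\x \;=\; \rho\!\left(\tfrac{\partial}{\partial z_1},\dots,\tfrac{\partial}{\partial z_d}\right)\int_{P}e^{\langle \x,\z\rangle}\,d\x.
\]
Applying the operator $\rho(\partial/\partial\z)$ term by term to the first-stage formula, and expanding each $\rho(\partial/\partial\z)[\,e^{\langle\bv,\z\rangle}\tilde q_\bv(\z)\,]$ by the Leibniz rule, every derivative of $e^{\langle\bv,\z\rangle}$ only multiplies by a coordinate of $\bv$, while every derivative of $\tilde q_\bv$ produces another rational function. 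The result is a sum $\sum_{\bv\in\V(P)}e^{\langle\bv,\z\rangle}q_\bv(\z)$ with $q_\bv$ rational, as required.

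The main obstacle is the interpretation of the identity on the analytic side: the cone transforms $\hat{1}_{K_{\bv,i}}(\z)$ are defined as genuine integrals only in the interior of their respective dual cones, and the intersection of these open sets over all vertices may well be empty. Thus the identity of stage one must be read as an equality of rational functions obtained by analytic continuation of the integrals from their regions of convergence, in the spirit of \cite[Chapter~9]{MR2455889}. Once this is granted, the second stage is routine, since applying a constant-coefficient differential operator preserves both rationality and the domain of analyticity, so \eqref{eq:BrionBarvinoketc} holds wherever each summand $e^{\langle\bv,\z\rangle}q_\bv(\z)$ is analytic.
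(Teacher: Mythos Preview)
Your argument is correct. Both you and the paper reduce to the simple case and then pull the polynomial density in via the constant-coefficient operator $\rho(\partial/\partial\z)$ with differentiation under the integral sign, so the second stage is identical. The difference lies in the first stage: the paper triangulates the polytope $P$ itself into simplices $\Delta_1,\dots,\Delta_M$ with no new vertices, applies Theorem~\ref{th:BrionBarvinoketc} to each simplex, and then (implicitly) regroups the resulting double sum $\sum_i\sum_{\bv\in\V(\Delta_i)}$ by vertices of $P$; you instead keep Lemma~\ref{lem:hatcone_charfun} for $P$ as is and triangulate each tangent cone $K_\bv$ into simple subcones. Your route is arguably cleaner---the sum is indexed by $\V(P)$ from the outset, and it is exactly the decomposition the paper itself uses later in Section~\ref{sec:non_simple} to define $\tilde D_\bv(\z)$---while the paper's route has the minor advantage of never invoking Brianchon--Gram or Lemma~\ref{lem:hatcone} beyond the simple case, relying only on the already-proved black box of Theorem~\ref{th:BrionBarvinoketc}. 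Your remark about analytic continuation across the dual cones is well taken and applies equally to both arguments.
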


\proof
%\begin{rem}\label{rem:diff}
We may first employ the fact that every convex polytope $P$ has a triangulation into some $M$ simplices $\Delta_i$, with no new vertices.
We therefore have $\hat 1_P(\z) = \sum_{i=1}^M \hat 1_{\Delta_i}(\z)$, because
the $d$-dimensional Fourier transform vanishes on all of the lower-dimensional intersections
of the various simplices $\Delta_i$.
We  observe that
\beq
\int_{P} e ^{\langle \x, \z \rangle} \rho(\x)d\x =
\rho \left(\frac{\partial}{\partial z_1},\dots,\frac{\partial}{\partial z_d}\right)
\int_{P} e ^{\langle \x, \z \rangle} d\x,
\eeq
because due to the compactness of $P$,  differentiation under the integral sign is valid.  Thus
\beq
\rho \left(\frac{\partial}{\partial z_1},\dots,\frac{\partial}{\partial z_d}\right)
\int_{P} e ^{\langle \x, \z \rangle} d\x =
\rho \left(\frac{\partial}{\partial z_1},\dots,\frac{\partial}{\partial z_d}\right)
\sum_{i=1}^M \hat 1_{\Delta_i}(\z).
\eeq

Now by Theorem \ref{th:BrionBarvinoketc}, applied to each simple polytope $\Delta_i$, we finally have
\beq \label{density.operator}
\int_{P} e ^{\langle \x, \z \rangle} \rho(\x)d\x  =
\rho \left(\frac{\partial}{\partial z_1},\dots,\frac{\partial}{\partial z_d}\right)
\sum_{i=1}^M  (-1)^d  \sum_{\bv \in \V(\Delta_i)}
\frac{     e ^{ \langle \bv, \z \rangle}  \det K_{\bv}     }
{  \prod_{k=1}^d   \langle \w_k(\bv), \z \rangle  },
\eeq
giving us the desired conclusion upon applying the differential operator to each rational function.
\qed

%------------------------------------------------------------------------------------
\bigskip
\noindent
We recall from the introduction that the basis-free moments for uniform density were defined by
\[
\mu_j (\z):= \int_P  \langle \x, \z \rangle^j dx.
\]

\bigskip
The following set of moment formulas can also be found in \cite[Section~3.2]{MR982338}, as well as in \cite[Section~10.3]{BR07}.

\begin{theorem} \label{th:MomentsFormula} (Moments Formula for uniform density)
Given a simple polytope $P$, with uniform density $\rho \equiv 1$, we have the moment formulas:
\beq\label{brion2}
\mu_j (\z)=
\frac{j! (-1)^d}{ (j+d)!}
\sum_{\bv\in \V(P)} \bil{\bv}{\z}^{j+d} D_\bv(\z),
\eeq
for each integer $j \geq 0$, where
\beq
D_\bv(\z):=\frac{|\det K_\bv|}{\prod_{k=1}^d\bil{\w_k(\bv)}{\z}},
\eeq
for each $\z \in \C^d$ such that the denominators in $D_\bv(\z)$ do not vanish. Moreover, we also have the following companion identities:
\beq\label{brionzero2}
0=\sum_{v\in \V(P)}
\bil{\bv}{\z}^{j} D_\bv(\z),
\eeq
for each $ 0 \leq j \leq d-1$.
\end{theorem}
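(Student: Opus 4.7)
The plan is to derive both the moment formulas \eqref{brion2} and the companion vanishing identities \eqref{brionzero2} simultaneously from Theorem~\ref{th:BrionBarvinoketc} by a single generating-function argument: substitute $t\z$ for $\z$, expand both sides as Laurent series in the scalar parameter $t$, and equate coefficients.

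First I would fix a vector $\z\in\R^d$ which is not orthogonal to any edge of $P$; then $t\z$ also satisfies this non-orthogonality condition for every $t\neq 0$, so Theorem~\ref{th:BrionBarvinoketc} applies with $\z$ replaced by $t\z$. On the left-hand side, I would use the entire Taylor expansion
\[
\int_P e^{t\bil{\x}{\z}}\,d\x \;=\; \sum_{j=0}^\infty \frac{t^j}{j!}\int_P \bil{\x}{\z}^j d\x \;=\; \sum_{j=0}^\infty \frac{t^j}{j!}\,\mu_j(\z),
\]
which converges for all $t\in\C$ by compactness of $P$. On the right-hand side, I would exploit the homogeneity of $D_\bv$: since $D_\bv(\z)$ is homogeneous of degree $-d$ in $\z$, one has $D_\bv(t\z)=t^{-d}D_\bv(\z)$, and expanding each exponential $e^{t\bil{\bv}{\z}}$ as a power series in $t$ yields
\[
(-1)^d\sum_{\bv\in\V(P)} e^{t\bil{\bv}{\z}}D_\bv(t\z) \;=\; \frac{(-1)^d}{t^d}\sum_{j=0}^\infty \frac{t^j}{j!}\sum_{\bv\in\V(P)}\bil{\bv}{\z}^{j}D_\bv(\z).
\]

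Next I would equate the Laurent expansions coefficient-by-coefficient. Since the left-hand side is holomorphic at $t=0$, the coefficients of $t^{-d}, t^{-d+1},\dots,t^{-1}$ on the right-hand side must all vanish; this gives precisely the companion identities
\[
\sum_{\bv\in\V(P)}\bil{\bv}{\z}^{j}D_\bv(\z)=0,\qquad 0\le j\le d-1.
\]
Matching the $t^j$-coefficient for $j\ge 0$ then yields
\[
\frac{\mu_j(\z)}{j!}=\frac{(-1)^d}{(j+d)!}\sum_{\bv\in\V(P)}\bil{\bv}{\z}^{j+d}D_\bv(\z),
\]
which is \eqref{brion2} after multiplying by $j!$.

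The main technical point (and the only place some care is needed) is justifying the term-by-term expansion on the right-hand side: a priori, each summand $e^{t\bil{\bv}{\z}}D_\bv(t\z)$ has a pole of order $d$ at $t=0$, so the interchange of summation with the Taylor expansion must be done formally as Laurent series and then the cancellations produced by the companion identities show that the sum is in fact holomorphic at $t=0$ and agrees with the LHS. One clean way to handle this is to work inside the field of formal Laurent series in $t$: both sides belong to $\R[\![t]\!]$ once we know the negative-power coefficients vanish, and the equality of two analytic functions on the punctured disk $t\ne 0$ forces equality of their Laurent expansions, which delivers both \eqref{brion2} and \eqref{brionzero2} at once.
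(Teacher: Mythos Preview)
Your proof is correct and follows essentially the same route as the paper: substitute $t\z$ into the Fourier--Laplace identity of Theorem~\ref{th:BrionBarvinoketc}, expand both sides as Laurent series in $t$, and equate coefficients. You supply more detail than the paper does (in particular, spelling out that the companion identities arise from the vanishing of the negative-power coefficients and discussing the formal justification), but the underlying argument is identical.
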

\proof
We begin with the explicit identity for the Fourier-Laplace transform of any convex polytope, namely \eqref{eq:simpleexp}, and we replace $\z$ by $t\z$, where $t >0$ is now treated as a real variable:
\[
\int_{P} e ^{ t \langle \x, \z \rangle} d\x = (-1)^d  \sum_{\bv \in \V(P)}
\frac{     e ^{t \langle \bv, \z \rangle}  \det K_{\bv}     }
{ t^d \prod_{k=1}^d   \langle \w_k(\bv), \z \rangle  }.
\]
Now we expand both sides in their Laurent series about $t=0$, and equate the coefficient of $t^j$ on both sides to obtain the desired moment identities.
\qed

%------------------------------------------------------------------------------------
 \bigskip

\begin{theorem} \label{th:MomentsFormula.density} (Moments Formula for polynomial density and any convex polytope)
Suppose we have a homogeneous polynomial density function $\rho(\x)$, of degree $\de$, defined over any convex polytope $P$.
For each integer $j \geq 0$, we have the density moments formulas
\beq\label{moments.density}
\mu_j (\z)=
\frac{j! (-1)^d}{ (j+d+\de)!}
\sum_{i=1}^M \sum_{\bv\in\V(\Delta_i)}
  \rho \left(\frac{\partial}{\partial z_1},\dots,\frac{\partial}{\partial z_d}\right)
 \bil{\bv}{\z}^{j+d+\de} D_\bv(\z),
\eeq
where
\beq\label{Dvz2}
D_\bv(\z):=
\frac{    | \det K_{\bv}  |   }
{  \prod_{k=1}^d   \langle \w_k(\bv), \z \rangle  }.
\eeq
These identities are valid for each $\z \in \C^d$ such that the denominators in $D_\bv(\z)$ do not vanish.   In addition, we also have the following companion identities:
\beq\label{brionzero3}
0=\rho\left(\frac{\partial}{\partial z_1},\ldots,\frac{\partial}{\partial z_d}\right)
\sum_{i=1}^M \sum_{\bv\in\V(\Delta_i)}
 \bil{\bv}{\z}^{j}   D_\bv(\z),
\eeq
for each $ 0 \leq j \leq d + \de -1$.
\end{theorem}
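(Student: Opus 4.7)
\textbf{Proof proposal for Theorem~\ref{th:MomentsFormula.density}.}
The plan is to mimic the Laurent-expansion trick used in Theorem~\ref{th:MomentsFormula}, but starting from the differentiated form \eqref{density.operator} rather than from \eqref{eq:simpleexp}. First I would take the identity \eqref{density.operator}, which already expresses $\int_P e^{\langle \x,\z\rangle}\rho(\x)\,d\x$ as $\rho(\partial/\partial z_1,\dots,\partial/\partial z_d)$ applied to a sum of rational-exponential terms over the vertices of a chosen triangulation $\Delta_1,\dots,\Delta_M$ of $P$. Then I would rescale $\z\mapsto t\z$ throughout, where $t>0$ is a new formal variable, on both sides of \eqref{density.operator}.

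Two rescaling observations then do all the work. On the left, the usual Taylor expansion gives $\int_P e^{t\langle\x,\z\rangle}\rho(\x)\,d\x=\sum_{j\ge 0}(t^j/j!)\mu_j(\z)$. On the right, the denominators $\prod_k\langle \w_k(\bv),t\z\rangle=t^d\prod_k\langle \w_k(\bv),\z\rangle$ contribute an overall $t^{-d}$, while the exponential expands as $e^{t\langle\bv,\z\rangle}=\sum_{j\ge 0}t^j\langle\bv,\z\rangle^j/j!$. The crucial step is handling the differential operator after rescaling: since $\rho$ is homogeneous of degree $\de$ and $\partial/\partial z'_i=t^{-1}\partial/\partial z_i$ under the substitution $\z'=t\z$, one has $\rho(\partial/\partial \z')\big|_{\z'=t\z}=t^{-\de}\rho(\partial/\partial\z)$ when acting on functions that are then expressed in the variable $\z$. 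Combining these factors produces an overall $t^{-d-\de}$ multiplying the differentiated sum, so the right-hand side becomes
\[
t^{-d-\de}\,\rho\!\left(\tfrac{\partial}{\partial z_1},\dots,\tfrac{\partial}{\partial z_d}\right)\sum_{i=1}^{M}(-1)^d\sum_{\bv\in\V(\Delta_i)}\sum_{j\ge 0}\frac{t^j\,\langle\bv,\z\rangle^j\,\det K_\bv}{j!\,\prod_{k=1}^{d}\langle\w_k(\bv),\z\rangle}.
\]

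To finish, I would equate coefficients of $t^k$ on both sides as formal Laurent series in $t$. For each $k\ge 0$ the LHS contributes $\mu_k(\z)/k!$ and the RHS selects the term with $j=k+d+\de$, yielding exactly \eqref{moments.density} after multiplying through by $k!$ and recognising $D_\bv(\z)$ from \eqref{Dvz2}. For the strictly negative powers $t^{-\ell}$ with $1\le\ell\le d+\de$, the LHS contributes zero, so setting the matching RHS coefficient to zero gives the companion identities \eqref{brionzero3} with $j=d+\de-\ell$ running over $\{0,1,\dots,d+\de-1\}$. A validity comment on the domain of $\z$: the manipulations above are legitimate whenever the denominators $\prod_k\langle\w_k(\bv),\z\rangle$ do not vanish and $t$ is small, so that both series converge; the final identities extend by analytic continuation to all admissible $\z$.

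The main obstacle I anticipate is bookkeeping rather than conceptual: one must justify that $\rho(\partial/\partial\z)$ and the interchange $\z\mapsto t\z$ commute with the infinite summation and with the differentiation under the integral sign. The integral side is easy, since $P$ is compact and the integrand is smooth in $\z$. On the sum side, each summand is an analytic function of $\z$ in its natural domain, so interchanging $\rho(\partial/\partial\z)$ with the Taylor expansion of $e^{t\langle\bv,\z\rangle}$ is justified by term-by-term differentiation of a convergent power series. Once these routine legitimations are in place, the matching of Laurent coefficients is straightforward and yields both \eqref{moments.density} and \eqref{brionzero3} simultaneously.
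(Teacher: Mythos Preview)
Your proposal is correct and follows essentially the same route as the paper: start from \eqref{density.operator}, substitute $\z\mapsto t\z$, use the homogeneity of $\rho$ to pull out $t^{-\de}$ from the differential operator, expand both sides as Laurent series in $t$, and match coefficients for $j\ge 0$ and for the negative powers. Your write-up is in fact more careful than the paper's own proof about justifying the interchange of $\rho(\partial/\partial\z)$ with the series expansion.
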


\proof
We begin with (\ref{density.operator}), and replace $\z$ by $t \z$, for any fixed $t >0$. Again, expanding both sides in their Laurent expansions about $t=0$ gives us:

\begin{eqnarray}  
\sum_{j=0}^{\infty}\frac{\mu_j}{j!}t^j &=& \rho\left(\frac{\partial}{t\cdot\partial z_1},\ldots,\frac{\partial}{t\cdot\partial z_d}\right)\sum_{i=1}^M \sum_{\bv\in\V(\Delta_i)}\sum_{j=0}^{\infty}\frac{\bil{\bv}{\z}^j}{j!}(-1)^{d}D_\bv(\z) t^{j-d} \notag\\
&=&\rho\left(\frac{\partial}{\partial z_1},\ldots,\frac{\partial}{\partial z_d}\right)
\sum_{i=1}^M \sum_{\bv\in\V(\Delta_i)}
\sum_{j=0}^{\infty}\frac{\bil{\bv}{\z}^j}{j!}(-1)^{d}D_\bv(\z) t^{j-d-\de}. 
\label{diff_equation1}
\end{eqnarray}
%\label{last.identity}
%\label{diff_equation1}

We now equate the coefficient of $t^j$, for each $j \geq 0$, on both sides of the former identity \eqref{diff_equation1}, to obtain the desired moment formulas for variable density:
\beq\label{brion_m}
\mu_j (\z)=
\frac{j! (-1)^d}{ (j+d+\de)!}
\sum_{i=1}^M \sum_{\bv\in\V(\Delta_i)}
\rho\left(\frac{\partial}{\partial z_1},\ldots,\frac{\partial}{\partial z_d}\right)
 \bil{\bv}{\z}^{j+d+\de} D_\bv(\z).
\eeq
Moreover, we also obtain the desired companion identities (\ref{brionzero3}), by equating the first $d+ \de$ coefficients of \eqref{diff_equation1},
for each $ 0 \leq j \leq d+\de-1$.
%\beq\label{brion_m}
%\mu_j (\z)= \frac{j! (-1)^d}{ (j+d)!}
%\sum_{\bv\in \V(P)} \bil{\bv}{\z}^{j+d} D_\bv(\z),
%\eeq
\qed

\end{document}